\numberwithin{equation}{section}
\theoremstyle{plain}
\newtheorem{thm}{Theorem}[section]
\newtheorem{lem}[thm]{Lemma}
\newtheorem{prop}[thm]{Proposition}
\newtheorem{cor}[thm]{Corollary}
\newtheorem{rem}[thm]{Remark}
\theoremstyle{remark}
\theoremstyle{definition}
\def\B{\mathbb{B}}
\def\R{\mathbb{R}}
\def\N{\mathbb{N}}
\def\S{\Sigma}
\def\s{\sigma}
\def\e{\epsilon}
\def\g{\gamma}
\def\d{\delta}
\def\D{\Delta}
\def\G{\Gamma}
\def\a{\alpha}
\def\P{\mathscr{P}}
\newcommand{\dist}{\text{dist}}
\DeclareMathOperator{\diam}{diam}
\DeclareMathAlphabet{\mathscr}{OT1}{pzc}{m}{it}
\begin{document}

\title{Weak chord-arc curves and double-dome quasisymmetric spheres}
\date{\today}
\author{Vyron Vellis}
\address{Department of Mathematics and Statistics, P.O. Box 35 (MaD), FI-40014 University of Jyv\"askyl\"a, Jyv\"askyl\"a, Finland}
\email{vyron.v.vellis@jyu.fi}

\thanks{Research supported in part by the Academy of Finland project 257482.}
\subjclass[2010]{Primary 30C65; Secondary 30C62}
\keywords{quasisymmetric spheres, double-dome-like surfaces, chord-arc property, Ahlfors 2-regularity}

\begin{abstract}
Let $\Omega$ be a planar Jordan domain and $\a>0$. We consider double-dome-like surfaces $\S(\Omega,t^{\a})$ over $\overline{\Omega}$ where the height of the surface over any point $x\in\overline{\Omega}$ equals 
$\dist(x,\partial\Omega)^{\a}$. We identify the necessary and sufficient conditions in terms of $\Omega$ and $\a$ so that these surfaces are quasisymmetric to $\mathbb{S}^2$ and we show that $\S(\Omega,t^{\a})$ 
is quasisymmetric to the unit sphere $\mathbb{S}^2$ if and only if it is linearly locally connected and Ahlfors $2$-regular.
\end{abstract}

\maketitle
\date{\today}

\section{Introduction}\label{sec:intro}
A metric space which is quasisymmetric to the standard $n$-sphere $\mathbb{S}^n$ is called a quasisymmetric $n$-sphere. Quasisymmetric circles were completely characterized by Tukia and V\"ais\"al\"a
 in \cite{TuVa}. Bonk and Kleiner \cite{BonK} identified a necessary and sufficient condition for metric $2$-spheres to be quasisymmetric spheres. A consequence of their main theorem is that if a metric $2$-sphere is 
linearly locally connected (or LLC) and Ahlfors $2$-regular then it is a quasisymmetric $2$-sphere. Although the LLC property is necessary, there are examples of quasisymmetric $2$-spheres constructed by 
snowflaking procedures that fail the $2$-regularity \cite{bishop}, \cite{DToro}, \cite{Meyer}.

In this paper we consider a special case of the double dome-like surfaces constructed over planar Jordan domains $\Omega$ in \cite{VW2}. For a number $\a>0$ and a Jordan domain $\Omega\subset\R^2$ consider
 the $2$-dimensional surface in $\R^3$
\[ \S(\Omega,t^{\a}) = \{ (x,z) \colon x\in \overline{\Omega} \, , z = \pm (\dist(x,\partial\Omega))^{\a}\}.\]
As it turns out, for these surfaces the $2$-regularity is necessary for quasisymmetric parametrization by $\mathbb{S}^2$.

\begin{thm}\label{thm:2-regnec}
The surface $\S(\Omega,t^{\a})$ is quasisymmetric to $\mathbb{S}^2$ if and only if it is linearly locally connected and $2$-regular.
\end{thm}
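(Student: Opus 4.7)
The plan is to treat the two implications in the biconditional separately, relying on an auxiliary geometric characterization of those pairs $(\Omega,\a)$ for which $\S(\Omega,t^{\a})$ is quasisymmetrically equivalent to $\mathbb{S}^2$.

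Sufficiency (LLC together with Ahlfors $2$-regularity implies quasisymmetric to $\mathbb{S}^2$) is immediate from the Bonk--Kleiner theorem: $\S(\Omega,t^{\a})$ is a topological $2$-sphere (two closed copies of $\overline{\Omega}$ glued along $\partial\Omega$), so any LLC, Ahlfors $2$-regular metric on it admits a quasisymmetric parametrization by $\mathbb{S}^2$. Necessity of LLC is also standard, since LLC is preserved by quasisymmetric homeomorphisms between bounded spaces and $\mathbb{S}^2$ is LLC. The substantive content of the theorem therefore lies in the necessity of $2$-regularity.

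To prove necessity of $2$-regularity, my plan is first to establish a separate characterization theorem (flagged by the paper's title) identifying the exact pairs $(\Omega,\a)$ for which $\S(\Omega,t^{\a})$ is a quasisymmetric $2$-sphere. I expect the characterization to say that $\partial\Omega$ must be a weak chord-arc curve and that $\a$ must lie in an admissible interval determined by the weak chord-arc data; the lower bound on $\a$ would rule out cusp-like ridges and the upper bound would keep the two sheets from flattening prematurely. With the characterization in hand, I would estimate $\mathcal{H}^2(B(p,r)\cap\S)$ directly, splitting into two regimes: (i) $p$ lies at distance at least $cr$ from the ridge $\partial\Omega\times\{0\}$, so the surface is locally a H\"older graph over a subregion of $\Omega$ and its area is comparable to planar Lebesgue measure of the projection; and (ii) $p$ lies within $cr$ of the ridge, so the ball captures contributions from both sheets, joined along an arc of $\partial\Omega$.

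The main obstacle is regime (ii). There the area element of $\S$ degenerates like $\dist(\cdot,\partial\Omega)^{\a-1}$, and the intersection of a metric ball on $\S$ with $\overline{\Omega}$ is a shape whose size is coupled simultaneously to the arc-length geometry of $\partial\Omega$ near the centre and to the exponent $\a$. The weak chord-arc hypothesis is what permits control of the length of $\partial\Omega$ inside small Euclidean balls, and the admissible range of $\a$ is precisely what balances the integral of $\dist(\cdot,\partial\Omega)^{\a-1}$ over a collar of width comparable to $r^{1/\a}$ against $r^{2}$. Delivering the two-sided bound $\mathcal{H}^{2}(B(p,r)\cap\S)\asymp r^{2}$ with constants uniform in $p$ and $r$ is the technical crux of the argument.
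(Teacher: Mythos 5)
Your high-level architecture matches the paper: sufficiency via Bonk--Kleiner, necessity of LLC is standard, and the necessity of $2$-regularity is the real content and is proved by first establishing a geometric characterization of when $\S(\Omega,t^{\a})$ is a quasisymmetric sphere. However, your guess at that characterization is off in two ways that matter for the proof.

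First, you miss the level quasicircle (LQC) property entirely. The paper's Theorem \ref{thm:main} shows that for $\a\in(0,1)$, quasisymmetry to $\mathbb{S}^2$ is equivalent to \emph{two} conditions: $\Omega$ has the LQC property \emph{and} $\partial\Omega$ is weak chord-arc. The LQC property is not a decoration: it is exactly what controls the geometry of the level sets $\g_\e$, and the entire $2$-regularity argument (Lemma \ref{lem:2regsquare}) is organized around proving that $\g_\e\cap\pi(D)$ is a chord-arc curve so that Lemma \ref{lem:estimation} can be applied. The weak chord-arc condition on $\partial\Omega$ alone gives no such control on the interior level sets. The LQC property is also the source of the LLC property of the surface (Lemma \ref{lem:qcirclenec}), so it must appear in any correct characterization.

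Second, there is no admissible subinterval of $\a$ determined by the chord-arc data. Theorem \ref{thm:main} is uniform in $\a\in(0,1)$, and the paper explicitly records that the existence of a quasisymmetric parametrization is $\a$-independent on that interval. The only $\a$-thresholds are the trivial ones: for $\a>1$ the surface is never LLC, and $\a=1$ is a separate (bi-Lipschitz) case reducible to $\partial\Omega$ being a quasicircle. Your proposed mechanism --- a lower and upper bound on $\a$ balancing the collar integral of $\dist(\cdot,\partial\Omega)^{\a-1}$ against $r^2$ --- is therefore not the right picture. Relatedly, the paper's area estimate does not go through a direct coarea-type integral of $\dist^{\a-1}$; instead it decomposes a square piece $D$ using a nested sequence of partitions $\P_k$ of a boundary arc $\G_0$ (each of scale $(\diam\G_0)^{1/\a^k}$), reduces each stage to a chord-arc bound on level curves, and then sums a convergent series. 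If you want to carry out the collar-integral version you would still need the LQC property and the weak chord-arc bounds to control the Lebesgue measure of $\{x:\dist(x,\partial\Omega)\in(\e,2\e)\}$ uniformly in scale, which is essentially the same combinatorial input the paper's iteration uses.
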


What conditions on $\Omega$ and $\a$ ensure that $\S(\Omega,t^{\a})$ is a quasisymmetric $2$-sphere? When $\a>1$, Theorem \ref{thm:2-regnec} is trivial since, for any Jordan domain $\Omega$, the surface $\S(\Omega,t^{\a})$ 
is not linearly locally connected and hence not quasisymmetric to $\mathbb{S}^2$. If $\a=1$, $\S(\Omega,t)$ is quasisymmetric to $\mathbb{S}^2$ if and only if $\partial\Omega$ is a 
quasicircle \cite[Theorem 1.1]{VW2}. This result, combined with the fact that the projection of $\S(\Omega,t)$ on $\overline{\Omega}$ is a bi-Lipschitz mapping, and the fact that $\overline{\Omega}$ is $2$-regular if 
$\partial\Omega$ is a quasicircle, gives Theorem \ref{thm:2-regnec} when $\a=1$.

In the case $\a\in(0,1)$, the part of $\S(\Omega,t^{\a})$ near $\partial\Omega\times\{0\}$ resembles the product of $\partial\Omega$ with an interval. V\"ais\"al\"a \cite{Vais3} has shown that the 
product $\gamma \times I$ of a Jordan arc $\gamma$ and an interval $I$ is quasisymmetric embeddable in $\R^2$ if and only if $\gamma$ satisfies the chord-arc condition (\ref{eq:CA}). In view of this result, it is 
expected that the chord-arc property of $\partial\Omega$ is necessary for $\S(\Omega,t^{\a})$ to be a quasisymmetric sphere. Moreover, as the double-dome-like surface envelops the interior $\Omega$ above and below, the 
following condition on $\Omega$ is needed to ensure the linear local connectedness of the surfaces $\S(\Omega,t^{\a})$. We say that $\Omega$ has the \emph{level quasicircle property} (or LQC property) if there exist 
$\e_0>0$ and $K>1$ such that for all $\e\in[0,\e_0]$, the $\e$-\emph{level set} of $\Omega$
\[ \g_{\e} = \{x\in \overline{\Omega} \colon \dist(x,\partial\Omega) =\e\}\]
is a $K$-quasicircle. A consequence of Theorem 1.2 in \cite{VW2} is that \emph{if a planar Jordan domain $\Omega$ has the LQC property and $\partial\Omega$ is a chord-arc curve then $\S(\Omega,t^{\a})$ is 
quasisymmetric to $\mathbb{S}^2$ for all $\a\in(0,1)$}.

For these surfaces, the LQC property of $\Omega$ is essential: if a Jordan domain $\Omega$ does not have the LQC property then $\S(\Omega,t^{\a})$ is not quasisymmetric to $\mathbb{S}^2$ for any 
$\a\in (0,1)$; see Lemma \ref{lem:qcirclenec}. However, the chord-arc condition of $\partial\Omega$ is not necessary. Contrary to the intuition based on V\"ais\"al\"a's result \cite{Vais3}, we construct in Section 
\ref{sec:logn} a Jordan domain $\Omega$ whose boundary is a non-rectifiable curve and $\S(\Omega,t^{\a})$ is a quasisymmetric sphere for all $\a \in (0,1)$. 

Instead, only a weak form of the chord-arc condition is needed: 
a Jordan curve $\G$ is said to have the \emph{weak chord-arc condition} if there exists $N_0>1$ such that every subarc $\G'\subset \G$ with $\diam{\G'} \leq 1$ can be covered by at most $N_0(\diam{\G'})^{-1}$ subarcs 
$\G_i \subset \G'$ of diameter at most $(\diam{\G'})^2$. Under this terminology we have the following.

\begin{thm}\label{thm:main}
Let $\Omega$ be a Jordan domain and $\a\in (0,1)$. The following are equivalent.
\begin{enumerate}
\item The surface $\S(\Omega,t^{\a})$ is a quasisymmetric sphere.
\item The surface $\S(\Omega,t^{\a})$ is LLC and $2$-regular.
\item $\Omega$ has the LQC property and $\partial\Omega$ is a weak chord-arc curve.
\end{enumerate}
\end{thm}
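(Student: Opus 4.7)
The strategy is cyclic: Theorem \ref{thm:2-regnec} gives (1) $\iff$ (2) immediately, so the remaining content is the equivalence (2) $\iff$ (3), which I will argue in two directions.

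For (2) $\Rightarrow$ (3), the LLC property of $\S(\Omega,t^\a)$ forces the LQC property of $\Omega$ via Lemma \ref{lem:qcirclenec}. To deduce the weak chord-arc condition on $\partial\Omega$, fix a subarc $\G'\subset\partial\Omega$ of diameter $r\leq 1$ and a basepoint $x_0\in\G'$, and bound the surface area $\mathcal{H}^2(B_{\S}((x_0,0),cr))$ above by $Cr^2$ using $2$-regularity. This ball captures the two-sided graph of $\dist(\cdot,\partial\Omega)^\a$ over a strip in $\overline{\Omega}$ of width comparable to $r^{1/\a}$ around $\G'$; since the surface-area element is $\sqrt{1+\a^2\dist(x,\partial\Omega)^{2\a-2}}\asymp \dist(x,\partial\Omega)^{\a-1}$ near $\partial\Omega$, the area over that strip is bounded below by $(\text{1-content of } \G' \text{ at scale } r^{1/\a})\cdot r$. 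Comparing the two bounds produces an efficient cover of $\G'$ at scale $r^{1/\a}$, from which the weak chord-arc cover by $N_0/r$ subarcs of diameter $r^2$ follows directly when $1/\a\geq 2$ and by an interpolation of scales otherwise.

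For (3) $\Rightarrow$ (2), I would verify LLC and $2$-regularity of $\S$ directly. The LLC condition follows by connecting two points on $\S$ via vertical segments to a suitable height $\e$ and a path along the level surface $\g_\e\times\{\pm\e^\a\}$, which is a $K$-quasicircle by LQC, while the weak chord-arc condition guarantees that such paths have controlled diameter. The $2$-regularity lower bound is routine using a bi-Lipschitz piece of $\S$ away from $\partial\Omega\times\{0\}$. The upper bound reduces, via Fubini in the distance coordinate, to bounding the length of each level curve $\g_\e$ within $B(x_0,r)$ by a universal multiple of $r$ for all $\e\leq r^{1/\a}$, which in turn requires a $1$-dimensional Minkowski content bound on $\partial\Omega\cap B(x_0,r)$ at scale $r^{1/\a}$. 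The main obstacle is precisely this step: weak chord-arc is formally a two-scale condition linking $r$ with $r^2$, whereas $r^{1/\a}$ can be finer than $r^2$ when $\a<1/2$. The plan is to iterate; each application from scale $r^{2^k}$ to $r^{2^{k+1}}$ multiplies the number of pieces by at most $N_0$ and squares the diameter, giving $\G'$ a cover by at most $N_0^k/r^{2^k-1}$ pieces of diameter $r^{2^k}$, and choosing $k\sim\log_2(1/\a)$ yields the needed bound with constants depending on $\a$ and $N_0$ but independent of $r$. A subsidiary issue, that the lengths of level curves $\g_\e$ are not directly controlled by the weak chord-arc condition on $\partial\Omega$, is handled using the $K$-quasicircle structure of each $\g_\e$ from LQC to transport Minkowski content estimates to nearby levels with comparable constants.
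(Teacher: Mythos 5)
Your logical cycle does not close. Theorem \ref{thm:2-regnec} is \emph{not} an independently proven ingredient for $\a\in(0,1)$: in the paper it is derived as a corollary of Theorem \ref{thm:main} itself (the cases $\a>1$ and $\a=1$ are settled separately in the introduction, but for $\a\in(0,1)$ the ``if and only if'' in Theorem \ref{thm:2-regnec} is precisely the equivalence (1)$\iff$(2) of Theorem \ref{thm:main}). So invoking Theorem \ref{thm:2-regnec} to obtain (1)$\iff$(2) ``immediately'' is circular. Your actual content proves (3)$\Rightarrow$(2), (2)$\Rightarrow$(3), and (2)$\Rightarrow$(1) (the last via Bonk--Kleiner), which gives (2)$\iff$(3) and (3)$\Rightarrow$(1), but leaves the arrow \emph{out} of (1) unproven. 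Neither (1)$\Rightarrow$(2) nor (1)$\Rightarrow$(3) is addressed, and neither is trivial: LLC passes through a quasisymmetry for free, but Ahlfors $2$-regularity does not (snowflaking destroys it), and likewise quasisymmetry alone does not obviously imply weak chord-arc. The paper closes the loop with Proposition \ref{prop:N(w,h)}, which applies the quasisymmetric map $F:\S(\Omega,t^{\a})\to\mathbb S^2$ itself, decomposing a square piece $D$ into tall narrow strips and square-like tiles $D_{ij}$ and using H\"older's inequality in V\"ais\"al\"a's manner to derive a contradiction if the weak chord-arc property fails. This step has no substitute in your proposal.

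The more interesting aspect of your proposal is the direction (2)$\Rightarrow$(3), which the paper does not prove directly; the paper instead proves (1)$\Rightarrow$(3). Your coarea/Fubini argument---$\mathcal H^2(B^3((x_0,0),cr)\cap\S)\gtrsim \int_0^{r^{1/\a}}t^{\a-1}\ell_t\,dt \gtrsim r\,\ell_{r^{1/\a}}$ and $\ell_t\gtrsim N(t)t$ for the covering number $N(t)$ of $\G'$ at scale $t$, combined with the upper bound $Cr^2$ from $2$-regularity---is a genuinely different and arguably more elementary route to the weak chord-arc condition than Proposition \ref{prop:N(w,h)}, since it uses only the Ahlfors regularity of $\S$ rather than the quasisymmetric parametrization. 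The ``interpolation of scales'' you invoke for $\a>1/2$ is exactly Remark \ref{rem:strongalpha} (which rests on iterating the argument of Lemma \ref{lem:strongalpha}), and for $\a\le1/2$ the coarsening is Lemma \ref{lem:strongalpha2}; you should make that dependence explicit. Had you additionally supplied the paper's (1)$\Rightarrow$(3) via V\"ais\"al\"a's method, your (2)$\Rightarrow$(3) argument would be a welcome redundancy and perhaps even a cleaner route to Theorem \ref{thm:2-regnec}.

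Two smaller imprecisions in (3)$\Rightarrow$(2): the LLC part needs only LQC, not weak chord-arc (Lemma \ref{lem:qcirclenec} is cited for exactly this), so the clause ``while the weak chord-arc condition guarantees that such paths have controlled diameter'' is superfluous. And your claim that one must bound $\ell(\g_\e\cap B(x_0,r))$ by a \emph{universal} multiple of $r$ for \emph{all} $\e\le r^{1/\a}$ is too strong---as $\e\to0$ the lengths of level curves can blow up whenever $\partial\Omega$ is non-rectifiable (this is the whole point of the weak chord-arc examples). The actual estimate is that $\ell_\e\lesssim M_{\a(\e)}\,r$ where $\a(\e)=\log r/\log\e$, with the constant $M_{\a(\e)}$ growing as $\e\to0$; what saves the integral is the weight $\e^{\a-1}$, and the paper avoids this delicate Fubini bookkeeping by the hierarchical decomposition of $D$ into pieces $E_0, E_i, E_{i_1i_2},\dots$ in Lemma \ref{lem:2regsquare}, summing a convergent geometric series. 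Your iteration over scales $r^{2^k}$ recovers the covering numbers correctly, but you should either perform the weighted integral with $\e$-dependent constants or reproduce the hierarchical decomposition; the uniform-constant claim as stated is false.
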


An immediate consequence of Theorem \ref{thm:main} is that the existence of a quasisymmetric parametrization of $\S(\Omega,t^{\a})$ by $\mathbb{S}^2$ does not depend on $\a \in (0,1)$. In particular, if $\S(\Omega,t^{\a})$ 
is quasisymmetric to $\mathbb{S}^2$ for some $\a \in (0,1)$ then it is quasisymmetric to $\mathbb{S}^2$ for all $\a \in (0,1)$.

Although not neccessarily rectifiable, any weak chord-arc quasicircle has Hausdorff dimension equal to $1$. In fact, a slightly stronger result holds true.

\begin{thm}\label{thm:assouad}
If a quasicircle satisfies the weak chord-arc property then it has Assouad dimension equal to $1$.
\end{thm}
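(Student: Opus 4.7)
The plan is to prove both inequalities separately. The lower bound $\dim_A\G\geq 1$ is immediate because $\G$ is a nondegenerate continuum. For the upper bound, I would iterate the weak chord-arc condition at the level of the Hausdorff $1$-content, convert the resulting estimate into a covering bound for subarcs, and then translate it into an Assouad-type estimate for balls using the quasicircle hypothesis.

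The key technical step is the iteration. For $E\subset\G$ and $r>0$, set $H^1_r(E):=\inf\bigl\{\sum_i\diam A_i:\ E\subset\bigcup_i A_i,\ \diam A_i\leq r\bigr\}$. The WCA immediately yields $H^1_{d^2}(\G')\leq N_0\,d$ for every subarc $\G'\subset\G$ with $\diam\G'\leq d\leq 1$, because the WCA cover has summed diameter at most $(N_0/d)\cdot d^2=N_0\,d$. Iterating on each subarc produced at the previous level gives $H^1_{d^{2^k}}(\G')\leq N_0^k\,d$ for every $k\geq 1$, since each iteration multiplies the summed diameter by at most $N_0$ while squaring the scale. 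For a general target $r\leq d$, choosing $k=\lceil\log_2(\log(1/r)/\log(1/d))\rceil$ ensures $d^{2^k}\leq r$, and monotonicity of $H^1$ together with $N_0^k\leq N_0(\log(1/r)/\log(1/d))^{\log_2 N_0}$ gives $H^1_r(\G')\leq C(\log(1/r)/\log(1/d))^{\log_2 N_0}\,d$. To convert this into a cardinality bound, I would take a near-optimal cover realizing $H^1_r(\G')$ and greedily merge any pair of adjacent subarcs with both diameters below $r/2$; since adjacent arcs in a cover of a connected set must share a point, each merge yields a set of diameter still at most $r$, and the resulting cover has at most $2H^1_r(\G')/r+1$ elements, giving $N(\G',r)\leq C'(\log(1/r)/\log(1/d))^{\log_2 N_0}\,(d/r)$ sets of diameter $\leq r$ covering $\G'$.

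To pass from arc covers to ball covers, I would use that $\G$ is a quasicircle: by bounded turning, every $B(x,R)\cap\G$ with $R\leq d_0<1$ is contained in a uniformly bounded number of subarcs of diameter $\lesssim R$, while the regime $R\sim\diam\G$ is handled by an initial bounded decomposition. Since the polylogarithmic factor $(\log(1/r)/\log(1/R))^{\log_2 N_0}$ is dominated by $(R/r)^{\epsilon}$ for every $\epsilon>0$ once $\log(1/R)$ is bounded below (which is ensured by $R\leq d_0<1$), this gives $\dim_A\G\leq 1+\epsilon$ for every $\epsilon>0$, hence $\dim_A\G=1$. The main obstacle—and the reason for working with Hausdorff content rather than covering numbers directly—is that a naive iteration that simply counts subarcs at each level would incur a factor $N_0/d^{2^k}$ at the $k$-th step (since WCA may produce subarcs of much smaller diameter than $d^{2^{k-1}}$) and yield only $\dim_A\G\leq 2$; the content estimate is additive in diameters, so each WCA application contributes just a single factor $N_0$ regardless of the actual distribution of sub-diameters, which is exactly what is needed to avoid the spurious power of $1/r$.
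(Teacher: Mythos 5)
Your argument is correct but genuinely different from the paper's. The paper argues by contradiction: assuming $\dim_A\G>1+\e$ and WCA, it locates a subarc $\G'$ and a scale $\d$ at which every $\d$-partition is forced to have at least $M\d^{-1-\e}$ pieces, then uses the ``tree lemma'' (Lemma \ref{lem:tree}) together with Lemma \ref{lem:strongalpha2} and Lemma \ref{lem:strongalpha} to propagate that abundance of pieces upward through the self-similar partition hierarchy and contradict the bound $M(\G'',\P'')\le M_0M_1$ that WCA imposes. You instead argue directly: iterating WCA gives the multiplicative recursion $H^1_{d^{2^k}}(\G')\le M_0^k d$, which after $k\sim\log_2\bigl(\log(1/r)/\log(1/d)\bigr)$ iterations yields a polylogarithmic-in-$1/r$ bound on the content and hence, after merging small adjacent arcs into pieces of diameter $\sim r$, a covering-number bound $N(\G',r)\lesssim(\log(1/r)/\log(1/d))^{\log_2 M_0}\,(d/r)$; since the polylog factor is $o((d/r)^{\e})$ for every $\e>0$ once $d\le d_0<1$, this gives $\dim_A\G\le 1$. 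Both arguments hinge on the same structural fact — that the summed-diameter index $M(\G,\P)$ (equivalently, the content) is what iterates multiplicatively, whereas the raw count of subarcs does not — and your closing remark identifies this exactly. What the paper's route buys is that it stays entirely within the index $M(\G,\P)$ framework already developed in Section \ref{sec:weakca}, reusing Lemmas \ref{lem:Mindex}, \ref{lem:strongalpha2}, \ref{lem:strongalpha} and \ref{lem:tree}; what your route buys is a more self-contained and quantitatively explicit bound (the polylog content estimate) that makes the ``Assouad dimension exactly $1$'' conclusion transparent rather than emerging from a contradiction. One small place to tighten if you write this out in full: the greedy merging should be organized so that pieces alternate ``small'' ($<r/2$) and ``not small'' after merging, which gives the bound $\lesssim H^1_r(\G')/r+1$; the quasicircle hypothesis is then needed both to pass from arbitrary $Y\subset\G$ to a containing arc of comparable diameter (as in the Remark preceding the paper's proof) and, as you note, to dispose of the initial range of scales near $\diam\G$.
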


The example in Section \ref{sec:sqrtn}, however, shows that the converse is false. Moreover, since the Assouad dimension is larger than the Hausdorff, upper box counting and lower box-counting dimensions, the conclusion of 
Theorem \ref{thm:assouad} holds for any of these dimensions. A consequence of Theorem \ref{thm:assouad} is that if $\Omega$ is a Jordan domain and $\partial\Omega$ has Assouad dimension greater than $1$, then the surface 
$\S(\Omega,t^{\a})$ is not quasisymmetric to $\mathbb{S}^2$ for any $\a\in(0,1)$.

\medskip

In Section \ref{sec:weakca} we define an index that measures how much a curve $\G$ deviates from being a chord-arc curve on a certain scale, and we discuss the weak chord-arc condition. The proofs of Theorem \ref{thm:main} 
and Theorem \ref{thm:assouad} are given in Section \ref{sec:mainresults}. Finally, two examples, based on homogeneous snowflakes, illustrating the weak chord-arc condition are presented in Section \ref{sec:snowflakes}.

\medskip

\textbf{Acknowledgments:} The author is grateful to Pekka Pankka, Kai Rajala and his adviser Jang-Mei Wu for stimulating discussions and valuable comments on the manuscript.

\section{Preliminaries}\label{sec:prelim}

\subsection{Definitions and notation}
An embedding $f$ of a metric space $(X,d_X)$ into a metric space $(Y,d_Y)$ is said to be $\eta$-\emph{quasisymmetric} if there exists a homeomorphism $\eta \colon [0,\infty) \rightarrow [0,\infty)$ such 
that for all  $x,a,b \in X$ and $t>0$ with $d_X(x,a) \leq t d_X(x,b)$,
\[d_Y(f(x),f(a)) \leq \eta(t)d_Y(f(x),f(b)). \]

A metric $n$-sphere $\mathcal S$ that is quasisymmetrically homeomorphic to  $\mathbb S^n$ is called a \emph{quasisymmetric sphere} when $n\ge 2$, and a \emph{quasisymmetric circle} when $n=1$.

Beurling and Ahlfors \cite{BerAhl} showed that a planar Jordan curve is a quasisymmetric circle if and only if it is a $K$-quasicircle ($K\geq1$), i.e., the image of the unit circle $\mathbb S^1$ under a 
$K$-quasiconformal homeomorphism of $\mathbb{R}^2$. A geometric characterization due to Ahlfors \cite{Ah} states that a Jordan curve $\g$ is a $K$-quasicircle if and only if it satisfies the 
\emph{2-point condition}:
\begin{equation}\label{eq:3pts}
\text{there exists } C>1 \text{ such that for all }  x,y \in \g, \, \,\diam{\g(x,y)} \leq C|x-y|,
\end{equation}
where the distortion $K$ and the $2$-point constant $C$ are quantitatively related. Here and in what follows, given two points $x,y$ on a metric circle $\g$ we denote by $\g(x,y)$ the subarc of $\g$ 
connecting $x$ and $y$ of smaller diameter, or  either subarc when both have the same diameter. Tukia and V\"ais\"al\"a \cite{TuVa} proved that a metric circle is a quasisymmetric circle if and only if it is 
doubling and satisfies condition (\ref{eq:3pts}).

For the rest, a Jordan arc is any proper subarc of a Jordan curve and a quasiarc is any proper subarc of a quasicircle. 

A rectifiable Jordan curve $\g$ in $\R^2$ is called a \emph{chord-arc curve} if 
\begin{equation}\label{eq:CA}
\text{there exists } c>1 \text{ such that for all }  x,y \in \g, \, \, \ell(\g'(x,y)) \leq c|x-y|,
\end{equation}
where $\g'(x,y)$ is the shortest component of $\G\setminus \{x,y\}$ and $\ell(\cdot)$ denotes length. It is easy to see that a rectifiable curve $\g$ is a chord-arc curve if and only if $\ell(\g(x,y)) \leq C|x-y|$ for some $C>1$;
here constants $c$ and $C$ are quantitatively related .

The notion of linear local connectivity generalizes the $2$-point condition (\ref{eq:3pts}) on curves to general spaces. A metric space $X$ is $\lambda$-\emph{linearly locally connected} 
(or $\lambda-\text{LLC}$) for $\lambda\geq 1$ if the following two conditions are satisfied.
\begin{enumerate}
\item ($\lambda-\text{LLC}_1$) If $x\in X$, $r>0$ and $y_1,y_2 \in B(x,r)\cap X$, then there exists a continuum $E\subset B(x,\lambda r)\cap X$ containing $y_1,y_2$.
\item ($\lambda-\text{LLC}_2$) If $x\in X$, $r>0$ and $y_1,y_2 \in X \setminus B(x,r)$, then there exists a continuum $E\subset X \setminus B(x,r/\lambda)$ containing $y_1,y_2$.
\end{enumerate}

A metric space $X$ is said to be \emph{Ahlfors $Q$-regular} if there is a constant $C>1$ such that the $Q$-dimensional Hausdorff measure $\mathcal{H}^Q$ of every open ball $B(a,r)$ in $X$ satisfies
\begin{equation}\label{eq:2regdefn}
C^{-1}r^Q \leq \mathcal{H}^Q(B(a,r)) \leq Cr^Q,
\end{equation}
when $0<r\leq \diam{X}$.

Bonk and Kleiner found in \cite{BonK} an intrinsic characterization of quasisymmetric $2$-spheres and then derived a readily applicable sufficient condition.

\begin{thm}[{\cite[Theorem 1.1, Lemma 2.5]{BonK}}]\label{thm:BonkKleiner}
Let $X$ be an Ahlfors $2$-regular metric space homeomorphic to $\mathbb{S}^2$. Then $X$ is quasisymmetric to  $\mathbb{S}^2$ if and only if $X$ is LLC.
\end{thm}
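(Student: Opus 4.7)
I would first dispose of the easy direction. If $X$ is $\eta$-quasisymmetric to $\mathbb{S}^2$ via some map $\varphi$, then because $\mathbb{S}^2$ is obviously $1$-LLC, pulling back continua under $\varphi^{-1}$ using the three-point version of the quasisymmetric condition yields quantitative $\lambda$-LLC of $X$, with $\lambda$ depending only on $\eta$. So this direction does not even require $2$-regularity.

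The hard direction is sufficiency. My plan is to construct the parametrization as a limit of combinatorial approximations at dyadic scales. For each $k \geq 1$, pick a maximal $2^{-k}$-separated set $V_k \subset X$ and form the graph $G_k$ whose vertices are the points of $V_k$ with two vertices joined by an edge when the corresponding balls $B(v,2^{-k})$ meet. Ahlfors $2$-regularity gives two-sided control of $\# V_k$ by a multiple of $4^k (\diam X)^2$ and a uniform upper bound on vertex degree, while LLC$_1$ and LLC$_2$ force $G_k$ to have the combinatorial structure of a ``fat triangulation'' of $X$ with uniformly bounded local geometry.

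The heart of the argument is to produce, for each $k$, a homeomorphism $f_k \colon X \to \mathbb{S}^2$ whose restriction to the cover $\{B(v,2^{-k}) : v \in V_k\}$ sends each ball to a spherical region of bounded eccentricity and of diameter comparable to $2^{-k}$, with comparability constants independent of $k$. The natural tool is a discrete uniformization (for instance circle packing) of $G_k$ by a similar triangulation of $\mathbb{S}^2$. To upgrade this into genuinely uniform control one needs a two-sided estimate on combinatorial $2$-modulus of curve families joining disjoint continua: Ahlfors $2$-regularity provides the upper bound (because $\mathcal H^2$ acts as an admissible mass distribution), while LLC$_1$ and LLC$_2$ supply enough connecting continua to yield a matching lower bound. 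This is exactly the combinatorial Loewner-type estimate on which the whole argument pivots, and it is the main obstacle.

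Once the $f_k$ are known to have a uniform distortion function, one normalizes (say by fixing three reference points) and extracts a subsequential limit $f = \lim f_{k_j}$. Continuity and the sphere topology give that $f$ is a continuous surjection, and the uniform modulus/combinatorial estimates transfer in the limit to an honest three-point quasisymmetry inequality for $f$, showing that $f \colon X \to \mathbb{S}^2$ is $\eta$-quasisymmetric with $\eta$ depending only on the $2$-regularity constant and the LLC constant of $X$. The substantive difficulty is thus concentrated in the uniform combinatorial modulus comparison; passage from discrete approximations to a continuous quasisymmetric map is then a routine compactness-plus-normalization argument.
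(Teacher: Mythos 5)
This statement is a cited result from Bonk and Kleiner; the paper does not prove it but invokes it as an external black box (via Theorem 1.1 and Lemma 2.5 of \cite{BonK}). There is therefore no internal proof in the paper to compare your attempt against, and what you should be asked to justify here is at most the reduction, not the theorem itself.

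That said, your sketch is a recognizable high-level outline of the Bonk--Kleiner argument: scale-$2^{-k}$ nets, nerve graphs, a discrete uniformization (circle-packing) step, a two-sided combinatorial modulus estimate as the technical pivot, and a normalization-plus-compactness passage to the limit. Two points deserve pushback. First, the assertion that $f_k$ should send each ball $B(v,2^{-k})$ to a spherical region ``of diameter comparable to $2^{-k}$'' is not what one can or should aim for: a quasisymmetric map preserves relative, not absolute, scales, so the correct target is bounded eccentricity of the images together with comparability of diameters of \emph{adjacent} pieces, not a global size match with $2^{-k}$. Second, you have deferred, without hint of argument, the two genuinely hard steps on which the whole construction rests: that LLC together with the topology of $\mathbb{S}^2$ forces the nerve $G_k$ to be (after minor surgery) a planar triangulation amenable to circle packing, and that the two-sided combinatorial $2$-modulus comparison actually holds with constants independent of $k$. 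These are not routine; they are precisely the content of \cite{BonK}. As a summary of the cited paper's strategy your plan is faithful in spirit, but as a proof it is a restatement of the problem with the principal difficulties named rather than addressed.
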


For $x \in \R^n$ and $r>0$, define $B^n(x,r) = \{ y \in \mathbb{R}^n \colon |x-y| < r\}$. In addition, let $\R^3_+ = \{x=(x_1,x_2, x_3)  \in \R^3 \colon x_3 \geq 0 \}$ be the upper half-space of $\R^3$ and 
$\R^3_- = \{x  \in \R^3 \colon x_3 \leq 0 \}$ be the lower half-space. For any $a = (a_1,a_2,a_3)\in \R^3$, denote by 
\[\pi(a) = (a_1,a_2,0)\]
the projection of $a$ on the plane $\R^{2}\times\{0\}$. For $x,y \in \R^2$, 
denote by $[x,y]$ the line segment having $x,y$ as its end points.

In the following, we write $u\lesssim v$ (resp. $u \simeq v$) when the ratio $u/v$ is bounded above  (resp. bounded above and below) by positive constants. These constants may vary, but are described 
in each occurrence.

\subsection{Geometry of level sets}
For a planar Jordan domain $\Omega$ and some $\e>0$ define the $\e$-level set
\[ \g_{\e} = \{ x \in \Omega \colon \dist(x,\partial\Omega) = \e\}.\]
In general the sets $\g_\e$ need not be connected and if connected need not be curves see \cite[Figure 1]{VW}. We say that $\Omega$ has the \emph{level quasicircle property} (or LQC \emph{property}), 
if there exist $\e_0>0$ and $K\geq 1$ such that the level set $\g_{\e}$ is a $K$-quasicircle  for every $0\leq \e \leq \e_0$. Sufficient conditions for a domain $\Omega$ to satisfy the LQC property have
 been given in \cite{VW} in terms of the \emph{chordal flatness} of $\partial \Omega$, a scale-invariant parameter measuring the local deviation of subarcs from their chords.

We list some properties of the level sets from \cite{VW} which are used in the proof of Theorem \ref{thm:main}.

\begin{lem}[{\cite[Lemma 4.1]{VW}}]\label{lem:levelsubarc}
Suppose that $\Omega$ is a Jordan domain and for some $\e>0$ the set $\g_{\e}$ is a Jordan curve. If $\sigma$ is a closed subarc of $\partial\Omega$ then the set 
$\sigma' = \{z\in\Omega \colon \dist(z,\partial\Omega) = \dist(z,\sigma) = \e\}$ is either empty or a subarc of $\g_{\e}$.
\end{lem}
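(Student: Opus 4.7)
My plan is to show that, whenever $\sigma'$ is nonempty, it is both closed in $\g_\e$ and connected; since $\g_\e$ is a Jordan curve, this forces $\sigma'$ to be a subarc. Closedness is immediate from compactness of $\sigma$: if $z_n \in \sigma'$ with $z_n \to z$ and $p_n \in \sigma$ realize $|z_n - p_n| = \e$, then a subsequence $p_n \to p \in \sigma$ gives $|z - p| = \e = \dist(z, \partial\Omega)$, whence $z \in \sigma'$.

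\textbf{Non-crossing of nearest-point segments.} The key geometric fact I would establish next is that, for distinct $z_1, z_2 \in \g_\e$ and any $p_i \in \partial\Omega$ with $|z_i - p_i| = \e$, the open segments $[z_1, p_1]$ and $[z_2, p_2]$ do not meet. Suppose they crossed at an interior point $w$; then $|z_i - w| + |w - p_i| = \e$ for $i = 1, 2$, so combining the triangle inequalities $|z_1 - p_2| \leq |z_1 - w| + |w - p_2|$ and $|z_2 - p_1| \leq |z_2 - w| + |w - p_1|$ with the lower bounds $|z_i - p_j| \geq \dist(z_i, \partial\Omega) = \e$ forces equality throughout. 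The resulting collinearity relations place all five points $z_1, z_2, p_1, p_2, w$ on a single line, with $z_1, z_2$ on the same side of $w$, while the equality constraints yield $|z_1 - w| = |z_2 - w|$, so $z_1 = z_2$, a contradiction.

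\textbf{Cutting the annulus to obtain connectedness.} Because $\g_\e$ is a Jordan curve inside the Jordan domain $\Omega$, the set $A := \overline{\Omega} \setminus U$, with $U$ the bounded component of $\R^2 \setminus \g_\e$, is a closed topological annulus with boundary $\g_\e \cup \partial\Omega$, and every nearest-point segment $[z, p]$ lies in $A$ (its interior satisfies $\dist(\cdot, \partial\Omega) < \e$ and cannot leave $\overline{\Omega}$ without contradicting $\dist(z, \partial\Omega) = \e$). Suppose for contradiction that $\sigma'$ is disconnected, and pick $z_1, z_2 \in \sigma'$ in distinct components together with $p_i \in \sigma$ nearest to $z_i$; assume $p_1 \neq p_2$, the degenerate case $p_1 = p_2$ being handled by a direct adaptation of the same picture. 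The two non-crossing chords $[z_1, p_1]$ and $[z_2, p_2]$ partition $A$ into two closed Jordan subregions $R_a, R_b$, each bounded by one of the arcs of $\g_\e \setminus \{z_1, z_2\}$, the two chords, and one of the two arcs of $\partial\Omega$ joining $p_1$ and $p_2$. Since $\sigma$ is a subarc of $\partial\Omega$ containing $p_1, p_2$, exactly one of the two $\partial\Omega$-arcs sits inside $\sigma$. Disconnectedness of $\sigma'$ lets me pick $w_a, w_b$ in the two open arcs of $\g_\e \setminus \{z_1, z_2\}$ with $w_a, w_b \notin \sigma'$, and by definition of $\sigma'$ their nearest points $q_a, q_b$ lie in $\partial\Omega \setminus \sigma$. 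The non-crossing property forces $[w_a, q_a]$ into $\overline{R_a}$ and $[w_b, q_b]$ into $\overline{R_b}$, so that $q_a, q_b$ lie on the two distinct $\partial\Omega$-arcs between $p_1$ and $p_2$; one of those arcs lies inside $\sigma$, giving the desired contradiction.

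\textbf{Main obstacle.} The non-crossing lemma is a clean triangle-inequality calculation and closedness is essentially immediate, so the subtlest part is the planar-topology set-up in the third step: verifying that $A$ really is a closed annulus, that every nearest-point segment remains in $A$, and that two non-crossing chords genuinely partition $A$ into two Jordan subregions in which each arc of $\g_\e \setminus \{z_1, z_2\}$ is matched with the correct arc of $\partial\Omega$. A secondary, less essential, worry is the handling of degenerate configurations such as $p_1 = p_2$ or multi-valued nearest-point sets, which need a short ad hoc check but do not alter the overall scheme.
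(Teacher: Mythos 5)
The paper does not reproduce a proof of this lemma; it is imported verbatim from \cite[Lemma 4.1]{VW}, so there is nothing in the present text to compare your argument against. On its own terms your argument is correct: closedness of $\sigma'$ follows by compactness as you say; the non-crossing of two nearest-point segments is a sound equality-case-of-the-triangle-inequality computation (and the collinear/overlapping and $p_1=p_2$ degeneracies you flag can be dispatched by the same distance estimates); nearest-point segments do lie in the closed annulus $A=\overline{\Omega}\setminus U$ because their interiors have $\dist(\cdot,\partial\Omega)<\e$ while every point of $U$ has $\dist(\cdot,\partial\Omega)>\e$; and the two disjoint cross-cuts genuinely split $A$ into two Jordan regions whose $\partial\Omega$-sides are the two arcs between $p_1$ and $p_2$, forcing the nearest point of one of $w_a,w_b$ into $\sigma$ and yielding the contradiction. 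One small point worth making explicit in a final write-up: a nonempty closed connected proper subset of a Jordan curve is an arc (possibly degenerate), which is what turns ``closed and connected'' into ``a subarc''.
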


\begin{lem}[{\cite[Lemma 6.1]{VW}}]\label{lem:levelcalemma}
There is a universal constant $c_0 >1$ such that if $\sigma$ is a Jordan arc and $\e>3\diam{\s}$ then the set $\s' = \{x\in\R^2 \colon \dist(x,\sigma) = \e \}$ is a $c_0$-chord-arc curve.
\end{lem}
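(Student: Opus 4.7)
\smallskip

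\textbf{Proof plan for Lemma~\ref{lem:levelcalemma}.} Fix a point $p_0\in\s$ and translate so that $p_0=0$. Set $d=\diam\s$ and $R=\e$, so $\s\subset\overline{B^2(0,d)}$ and $R>3d$. The plan is to exhibit an explicit bi-Lipschitz parametrization $\Phi\colon \mathbb{S}^1\to\s'$ with constants depending only on $R/d$ being bounded away from $1$ (in fact, only on the fact $R>3d$); this yields $\s'\simeq R\cdot\mathbb{S}^1$ and hence the chord-arc property with universal $c_0$.

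First I would localize $\s'$ to a thin annulus. If $x\in\s'$ and $q\in\s$ realizes $\dist(x,\s)=R$, then $|x|\ge|x-q|=R$ (since $0\in\s$) and $|x|\le|x-q|+|q|\le R+d$. Thus $\s'\subset\{R\le|x|\le R+d\}$. Next, for each direction $\theta\in\mathbb{S}^1$, consider $g_\theta(t)=\dist(t\theta,\s)$, which is $1$-Lipschitz with $g_\theta(0)=0$ and $g_\theta(t)\to\infty$. The key computation is a quantitative lower bound on the one-sided derivative of $g_\theta$ at any point $t\in[R,R+d]$: for such $t$ and any $q\in\s\subset\overline{B^2(0,d)}$,
\[
\frac{d}{dt}|t\theta-q|=\Bigl\langle\theta,\frac{t\theta-q}{|t\theta-q|}\Bigr\rangle\ge\frac{t-d}{t+d}\ge\frac{R-d}{R+d}>\tfrac12,
\]
using $R>3d$. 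Taking minima over $q$ (the distance function is the infimum of these convex functions), the one-sided slope of $g_\theta$ on $[R,R+d]$ is at least $1/2$; hence there is a unique $t_\theta\in[R,R+d]$ with $g_\theta(t_\theta)=R$, and by the annulus localization, every $x\in\s'$ has the form $x=t_\theta\theta$ for $\theta=x/|x|$. Define $\Phi(\theta)=t_\theta\theta$; this is a bijection $\mathbb{S}^1\to\s'$.

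The remaining step is to show $\Phi$ is bi-Lipschitz with universal constants. Since $|g_{\theta'}(t)-g_\theta(t)|\le t|\theta-\theta'|$, applying this at $t=t_\theta$ and invoking the slope bound from the previous paragraph gives
\[
\tfrac12|t_\theta-t_{\theta'}|\le|g_{\theta'}(t_\theta)-g_{\theta'}(t_{\theta'})|=|g_{\theta'}(t_\theta)-g_\theta(t_\theta)|\le(R+d)|\theta-\theta'|,
\]
whence $|\Phi(\theta)-\Phi(\theta')|\le(3R+3d)|\theta-\theta'|$. For the lower bound I would project $\Phi(\theta)-\Phi(\theta')=t_\theta\theta-t_{\theta'}\theta'$ onto the direction perpendicular to $\theta$: this component has magnitude $t_{\theta'}|\sin\angle(\theta,\theta')|\ge R|\theta-\theta'|/\sqrt2$ whenever the angle is at most $\pi/2$, and for larger separation both $|\Phi(\theta)|,|\Phi(\theta')|\ge R$ combined with $|\theta-\theta'|\le 2$ trivially gives $|\Phi(\theta)-\Phi(\theta')|\ge R\ge R|\theta-\theta'|/2$. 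So $\Phi$ is bi-Lipschitz between $\mathbb{S}^1$ and $\s'$ with constants depending only on $R>3d$, making $\s'$ bi-Lipschitz to a Euclidean circle and therefore a chord-arc curve with a universal constant $c_0$.

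The main obstacle is the derivative/slope estimate for $g_\theta$ on $[R,R+d]$: it is what makes the radial parametrization work, forces uniqueness of $t_\theta$, and drives both the upper and lower bi-Lipschitz bounds. The hypothesis $\e>3\diam\s$ is used essentially here to keep $(R-d)/(R+d)$ bounded away from $0$; everything else is a bookkeeping consequence.
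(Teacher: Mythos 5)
The paper states this lemma as a direct citation of \cite[Lemma 6.1]{VW} and supplies no proof of its own, so there is no in-text argument to compare against. Your proposed proof is correct and self-contained, and the polar-parametrization idea is natural here: the separation hypothesis $\e>3\diam\s$ is used exactly where it should be, to force the radial slope $(t-d)/(t+d)\ge(R-d)/(R+d)>1/2$ on $[R,R+d]$, which gives the star-shapedness of $\s'$ about any base point of $\s$ and makes $\theta\mapsto t_\theta$ a $2(R+d)$-Lipschitz selection. One small point you glide over is the one-sided slope bound for the minimum $g_\theta$: rather than invoking a Danskin-type rule, the cleanest route is the ``argmin swap'' --- for $R\le t_1<t_2$ pick $q^*$ realizing $g_\theta(t_2)$, so $g_\theta(t_2)-g_\theta(t_1)\ge|t_2\theta-q^*|-|t_1\theta-q^*|\ge\tfrac12(t_2-t_1)$ by the mean value theorem applied to the single smooth function $t\mapsto|t\theta-q^*|$ --- which avoids any differentiability issues for $g_\theta$. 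With that patch the bi-Lipschitz bounds $\tfrac{R}{\sqrt2}|\theta-\theta'|\le|\Phi(\theta)-\Phi(\theta')|\le 3(R+d)|\theta-\theta'|$ give a distortion of at most $3\sqrt2(R+d)/R<4\sqrt2$, hence a universal chord-arc constant for $\s'$ as claimed. Your argument in fact never uses that $\s$ is a Jordan arc, only that it is a nonempty compact subset of $\overline{B(0,d)}$ containing the base point; so it yields a mildly more general statement than the one quoted, which is a pleasant bonus.
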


\begin{lem}[{\cite[Theorem 1.3]{VW}}]\label{lem:levelcathm}
Let $\Omega$ be a Jordan domain that has the LQC property. If $\partial\Omega$ is a $c$-chord-arc curve then there exist $\e_0 >0$ and $c'>1$ such that $\g_{\e}$ is a $c'$-chord-arc curve for all $\e\in[0,\e_0]$.
\end{lem}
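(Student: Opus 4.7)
The plan is to reduce the chord-arc inequality for $\g_\e$ to an Ahlfors $1$-regularity estimate, and then transfer this regularity from $\partial\Omega$ to $\g_\e$ via a nearest-point projection. Since the LQC hypothesis already yields the $2$-point condition $\diam(\g_\e(x,y))\leq K|x-y|$ for all $\e\in[0,\e_0]$, it suffices to establish
\[ \mathcal{H}^1(\g_\e\cap B(z,r))\leq C\,r \qquad \text{for all } z\in\g_\e,\ 0<r\leq \diam\Omega,\]
with $C$ independent of $\e$; applied with $z=x$ and $r=\diam(\g_\e(x,y))$, this gives the chord-arc bound $\ell(\g_\e(x,y))\leq c'|x-y|$.

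I would treat two regimes separated by a threshold $r=A\e$, where $A$ depends only on $K$ and $c$. In the \emph{large-scale regime} $r\geq A\e$, I would work with the nearest-point projection $\pi_\e\colon\g_\e\to\partial\Omega$. The $K$-quasicircle property of both $\g_\e$ and $\partial\Omega$, combined with planar topology, should show that $\pi_\e$ is continuous with bounded multiplicity and that $|\pi_\e(p)-\pi_\e(q)|\simeq|p-q|$ whenever $|p-q|\gtrsim\e$. Lemma \ref{lem:levelsubarc} ensures that $\pi_\e$-preimages of subarcs of $\partial\Omega$ are subarcs of $\g_\e$, so $\pi_\e(\g_\e\cap B(z,r))$ is contained in a subarc of $\partial\Omega$ of diameter $\lesssim r$. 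Its length is $\lesssim r$ by the chord-arc property of $\partial\Omega$, and pulling back along $\pi_\e$ gives $\mathcal{H}^1(\g_\e\cap B(z,r))\lesssim r$.

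In the \emph{small-scale regime} $r\leq A\e$, I would use Lemma \ref{lem:levelsubarc} to identify a neighborhood of $z$ in $\g_\e$ with the equidistant set $\{y\in\R^2\colon \dist(y,\sigma)=\e\}$, where $\sigma\subset\partial\Omega$ is the small subarc of feet of nearest-point projections from that neighborhood. Because $\diam\sigma\lesssim r\ll\e$, Lemma \ref{lem:levelcalemma} applies and certifies that this equidistant set is a $c_0$-chord-arc curve with universal constant $c_0$, yielding Ahlfors $1$-regularity at all scales $\leq r$. A standard covering argument then stitches the two regimes together.

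The main obstacle is the large-scale regime: proving that $\pi_\e$ is genuinely well-behaved---single-valued outside a controlled set and bi-Lipschitz at scales above $\e$---demands both the LQC property, to rule out cusps and multiple nearest points on $\g_\e$, and the chord-arc property of $\partial\Omega$, to rule out boundary spiraling that could inflate arc lengths on $\g_\e$ relative to their $\pi_\e$-images. Once these two geometric comparisons are established, the resulting uniform Ahlfors $1$-regularity delivers the chord-arc constant $c'$ with quantitative dependence only on $K$, $c$, and $c_0$.
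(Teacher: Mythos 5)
The paper does not prove this statement: it is imported verbatim from \cite[Theorem~1.3]{VW}. The closest in-paper analogue is Case~1 of the proof of Lemma~\ref{lem:2regsquare}, which obtains the same type of length estimate for subarcs of level curves under the weaker weak chord-arc hypothesis, and that argument is the right template to compare against. Your high-level reduction is sound --- the LQC $2$-point condition plus an upper Ahlfors $1$-regularity bound yields chord-arc, and Lemmas~\ref{lem:levelcalemma} and~\ref{lem:levelsubarc} are indeed the two tools to deploy --- but both of your regime arguments ride on metric control of the nearest-point projection $\pi_\e$ that is unproved and in places false. In the small regime you need the set $\sigma$ of feet of a diameter-$r$ piece of $\g_\e$ to have $\diam\sigma\lesssim r\ll\e$ so that $\e>3\diam\sigma$ and Lemma~\ref{lem:levelcalemma} applies; in fact $\diam\sigma$ can be $\simeq\e$ no matter how small $r$ is, because $\pi_\e$ is discontinuous across the medial axis (in a slab of half-width $\e$, two arbitrarily close points of the central level curve have feet on opposite walls, $2\e$ apart). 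In the large regime, a length bound on $\pi_\e(\g_\e\cap B(z,r))$ does not transfer to a bound on $\mathcal{H}^1(\g_\e\cap B(z,r))$: $\pi_\e$ is many-to-one with no Lipschitz inverse, and the ``bi-Lipschitz above scale $\e$'' property you propose to establish says nothing about the $\mathcal H^1$ contribution coming from scales below $\e$. You flag this as the main obstacle, and it is a real one; it should be sidestepped rather than attacked.

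The standard argument needs no metric regularity of $\pi_\e$. Given a subarc $\s\subset\g_\e$ with endpoints $x,y$, take feet $x',y'\in\partial\Omega$ of the \emph{endpoints only} and set $\s'=\partial\Omega(x',y')$. If $\diam\s'\lesssim\e$, Lemma~\ref{lem:levelcalemma} makes the $\e$-equidistant set of $\s'$ a $c_0$-chord-arc Jordan curve of which $\s$ is a subarc (Lemma~\ref{lem:levelsubarc} supplies the topology), so $\ell(\s)\lesssim\diam\s$ at once. If $\diam\s'\gtrsim\e$, the chord-arc hypothesis on $\partial\Omega$ together with Lemma~\ref{lem:qcircles} lets one partition $\s'$ into $N\lesssim\diam(\s')/\e$ subarcs $\s'_n$ of diameter $\le\e/4$; setting $\s_n=\{z\in\Omega\colon\dist(z,\partial\Omega)=\dist(z,\s'_n)=\e\}$, Lemma~\ref{lem:levelsubarc} makes each $\s_n$ a subarc of $\g_\e$, these cover $\s$, and Lemma~\ref{lem:levelcalemma} gives $\ell(\s_n)\lesssim\e$, whence $\ell(\s)\lesssim N\e\lesssim\diam\s'\simeq\diam\s$. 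This is exactly the mechanism in Case~1 of Lemma~\ref{lem:2regsquare} with the honest chord-arc bound replacing the weak one, and it eliminates the small/large dichotomy on $r$ as well as any appeal to Lipschitz behavior of $\pi_\e$.
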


\section{A weak-chord arc property}\label{sec:weakca}

\subsection{A chord-arc index}\label{sec:ca-index}
Let $\G$ be a Jordan curve or arc. A \emph{partition} $\P$ of $\G$ is a finite set of mutually disjoint (except for their endpoints) subarcs of $\G$ whose union is all of $\G$. We denote with $|\P|$
 the number of elements a partition $\P$ has. A \emph{$\d$-partition} of $\G$, for $\d \in (0,1)$, is a partition $\P$ of $\G$ such that for each $\G' \in \P$
\[ \frac{\d}{2}\diam{\G} \leq \diam{\G'} \leq \delta \diam{\G}.\]
Standard compactness arguments show that every Jordan curve or arc has $\d$-partitions for all $\d\in(0,1)$. Moreover, if $\P$ is a $\d$-partition of $\G$ then $|\P| \geq 1/\d$.

For a partition $\mathscr{P}$ of $\G$ define
\[ M(\G,\P) = \frac{1}{\diam{\G}}\sum_{\G'\in\P}\diam{\G'}.\]
The finite number $M(\G,\P)$ measures, on the scale of $\P$, the deviation of $\G$ from being a chord-arc curve. When $\ell(\G)$ is finite, the number $M(\G,\P)$ is an estimation of $\ell(\G)/\diam{\G}$:
 for any $\e>0$, there exists a partition $\P$ of $\G$ such that $|M(\G,\P)-\ell(\G)/\diam{\G}|<\e$. On the other hand, if $\ell(\G)$ is infinite then for all $M>0$ there exists a partition $\P$ of $\G$ 
such that $M(\G,\P) >M$. The properties of the gauge $M(\G,\P)$ are summarized in the following lemma.

\begin{lem}\label{lem:Mindex}
Suppose that $\G$ is a Jordan curve or arc.
\begin{enumerate}
\item If $\P$ is a $\d$-partition then $\frac{1}{2}|\P| \d \leq M(\G,\P) \leq |\P|\d$.
\item Suppose that $\P =\{\G_1,\dots,\G_N\}$ is a partition of $\G$ and for each $i=1,\dots,N$, $\P_i$ is a partition of $\G_i$. Then,
\[ M(\G,\bigcup_{i=1}^N\P_i) \geq M(\G,\P).\]
\item Assume that $\G$ is a $K$-quasiarc and $\d\in(0,1)$. There exists $M_1>1$ depending on $K$ such that $M(\G,\P)/M(\G,\P')\leq M_1$ for any two $\d$-partitions $\P,\P'$ of $\G$.
\end{enumerate}
\end{lem}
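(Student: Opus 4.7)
The plan is to tackle the three parts in order, with (1) and (2) essentially reducing to definition-chasing and (3) absorbing the genuine content.

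Part (1) follows immediately by summing the $\d$-partition inequalities $\frac{\d}{2}\diam{\G} \leq \diam{\G'} \leq \d\diam{\G}$ over $\G'\in\P$ and dividing by $\diam{\G}$. For part (2), it suffices to prove the subadditivity $\sum_{\G'\in\P_i}\diam{\G'} \geq \diam{\G_i}$ for each $i$ and then sum; to get this, choose $a_i,b_i\in\G_i$ with $|a_i-b_i|$ arbitrarily close to $\diam{\G_i}$, list the endpoints of $\P_i$ between $a_i$ and $b_i$ along $\G_i$ as $a_i=q_0,q_1,\ldots,q_k=b_i$, and chain triangle inequalities $|a_i-b_i|\leq\sum_l|q_{l-1}-q_l|\leq\sum_l\diam{\G_{i,l}}\leq\sum_{\G'\in\P_i}\diam{\G'}$, where $\G_{i,l}\in\P_i$ is the subarc containing $[q_{l-1},q_l]$.

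The real work is in (3). By (1), $M(\G,\P)\leq\d|\P|$ and $M(\G,\P')\geq\frac{\d}{2}|\P'|$, so it suffices to prove a combinatorial bound $|\P|\leq N_K|\P'|$ with $N_K$ depending only on $K$; this yields $M(\G,\P)/M(\G,\P')\leq 2N_K=:M_1$. Write $C=C_K$ for the $2$-point constant of the quasiarc $\G$. The key claim is that each subarc $\eta\in\P'$ contains at most $N_K$ endpoints of $\P$. Indeed, if $p_i,p_j$ are two such endpoints, then the unique subarc $\G(p_i,p_j)$ of $\G$ is contained in $\eta$ (since $\eta$ is itself a subarc of $\G$), so $\diam{\G(p_i,p_j)}\leq\d\diam{\G}$; on the other hand $\G(p_i,p_j)$ contains at least one full member of $\P$, so $\diam{\G(p_i,p_j)}\geq\frac{\d}{2}\diam{\G}$. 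The $2$-point condition then gives $|p_i-p_j|\geq\diam{\G(p_i,p_j)}/C\geq\frac{\d}{2C}\diam{\G}$. Hence the endpoints of $\P$ inside $\eta$ form a $\frac{\d}{2C}\diam{\G}$-separated subset of a set in $\R^2$ of diameter at most $\d\diam{\G}$, and a routine Euclidean packing estimate bounds their number by a constant $N_K=O(C^2)$. Summing over $\eta\in\P'$ finishes the argument; the reverse inequality is symmetric.

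The main obstacle is the counting step in (3). Without the $2$-point condition, a single subarc $\eta\in\P'$ could absorb arbitrarily many endpoints of $\P$: the diameter lower bound $\frac{\d}{2}\diam{\G}$ on a sub-subarc $\G(p_i,p_j)$ translates to a genuine chord-distance lower bound only because $\G$ is a quasiarc, and this conversion is precisely what allows Euclidean packing to kick in and close the loop.
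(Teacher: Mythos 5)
Your proof is correct and follows essentially the same route as the paper: parts (1) and (2) are the same definition-chasing, and the heart of part (3) is precisely the packing argument the paper isolates as Lemma~\ref{lem:qcircles}, namely that the $2$-point condition forces the endpoints of a $\d$-partition to be uniformly separated on the scale $\d\diam\G$, so that Euclidean doubling bounds how many can land in a single member of the other partition. You count partition endpoints per member of $\P'$ while the paper counts members of $\P'$ per member of $\P$, but these are dual formulations of the same covering estimate.
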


\begin{proof}
Property \emph{(1)} is an immediate consequence of the definition. For property \emph{(2)} simply note that $\diam{\G_i} \leq \sum_{\G'\in\P_i}\G'$. Finally, to show \emph{(3)}, let $\P = \{\G_1,\dots,\G_{N}\}$ and 
$\P' = \{\G_1',\dots,\G'_{N'}\}$ be two $\d$-partitions of $\G$. The $2$-point condition of $\G$ implies that there exists $N_0$ depending on $K$ such that each $\G_i$ can contain at most $N_0$ subarcs $\G_j'$. Thus, 
$N' \leq N_0 N$ and \emph{(3)} follows from \emph{(1)}.
\end{proof}

In the proof of Lemma \ref{lem:Mindex} we used a covering property of quasicircles which we prove in the following lemma.

\begin{lem}\label{lem:qcircles}
For each $K>1$ there exists a number $C>1$ depending only on $K$ such that, if $\G$ is a $K$-quasiarc and $\lambda\in(0,1]$ then, each partition $\{\G_1,\dots,\G_N\}$ with $\diam{\G_i} \geq \lambda \diam{\G}$ 
satisfies $N \leq C/\lambda^2$.
\end{lem}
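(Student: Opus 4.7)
The plan is to reduce the lemma to a standard packing estimate in $\R^2$. Let $D = \diam \G$ and, ordering the pieces along $\G$, label their common endpoints as $x_0, x_1, \ldots, x_N$ so that $\G_i = \G(x_{i-1}, x_i)$. I would first establish that these $N+1$ partition points are pairwise separated in $\R^2$ by at least $\lambda D / C_0$, where $C_0 = C_0(K)$ is the two-point constant of the $K$-quasiarc $\G$. The bound $N \leq C(K)/\lambda^2$ then follows by elementary area comparison: a planar disk of radius comparable to $D$ containing $\G$ can accommodate at most $C(K)/\lambda^2$ pairwise disjoint open disks of radius $\lambda D/(2C_0)$, which is exactly what the separation statement provides.

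For the separation, fix $0 \le i<j \le N$. Since the $x_k$ are precisely the division points of the partition, the subarc $\G(x_i,x_j)$ equals the union $\bigcup_{k=i+1}^{j}\G_k$; in particular it contains the whole piece $\G_{i+1}$, so
\[
\diam \G(x_i,x_j) \ \geq\ \diam \G_{i+1} \ \geq\ \lambda D.
\]
Applying the two-point condition (\ref{eq:3pts}) of the quasiarc $\G$ then gives $|x_i - x_j| \geq C_0^{-1} \lambda D$, which is the required separation.

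The proof is short, and its only real content is the observation that the subarc between any two partition points contains a full $\G_k$. I would flag this step as the one to state most carefully, because it is precisely what converts the per-piece diameter lower bound $\lambda D$ into a genuine separation of the partition points in the ambient plane---allowing the two-point condition of the quasiarc, rather than any more delicate fractal or dimension property of $\G$, to do the entire geometric work and yielding the explicit constant $C(K)= 16 C_0^2$.
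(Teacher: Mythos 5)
Your proof is correct and follows essentially the same route as the paper's: both establish that the partition endpoints are pairwise separated by $\gtrsim \lambda\,\diam\G$ via the two-point condition and then conclude by a packing/doubling estimate in $\R^2$. Your write-up usefully spells out the reason for the separation (any two partition points have a full piece $\G_k$ between them along the arc), which the paper leaves implicit; otherwise the arguments coincide.
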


\begin{proof}
We may assume that $\diam{\G}=1$. Fix $\lambda \in (0,1)$ and let $\{\G_1,\dots,\G_N\}$ be a partition of $\G$ with $\diam{\G_i} \geq \lambda$. Let $A=\{x_0,x_1,\dots,x_N\}$ be the set of 
endpoints of $\G_1,\dots,\G_N$. The $2$-point condition of $\G$ implies that there exists $C>1$ depending on $K$ such that $|x-y| \geq \lambda/C$ for all $x,y\in A$. Since all points of $A$ lie in $\R^2$ which is a doubling, 
space, there is a universal constant $C_0 >1$ such that $N \leq C_0 (\lambda/C)^{-2} = C_0C^2/\lambda^2$.
\end{proof}

The next lemma is another application of Lemma \ref{lem:qcircles}.

\begin{lem}\label{lem:strongalpha2}
Suppose that $\G$ is a $K$-quasiarc, $\d\in (0,1)$ and $\P$ a $\d$-partition of $\G$ with $|\P| \leq N_0/\d$ for some $N_0>1$. There exists $N_1$ depending on $K,N_0$ such that for all $\d' \in [\d,1)$ and all 
$\d'$-partitions $\P'$ of $\G$ we have $|\P'|\leq N_1 /\d'$.
\end{lem}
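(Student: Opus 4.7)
The plan is to bound $|\P'|$ by a double-counting argument on incidences between $\P$ and $\P'$. Write $\P=\{\G_1,\dots,\G_N\}$ with $N\le N_0/\d$ and let $\P'=\{\G_1',\dots,\G_{N'}'\}$ be an arbitrary $\d'$-partition of $\G$ for some $\d'\in[\d,1)$. Say that $\G_i$ and $\G_j'$ \emph{overlap} if $\G_i\cap\G_j'$ contains more than one point, and set
\[ k_j=\#\{i:\G_i\text{ overlaps }\G_j'\},\qquad l_i=\#\{j:\G_j'\text{ overlaps }\G_i\}, \]
so that $\sum_j k_j=\sum_i l_i$. The strategy is to bound $l_i$ above uniformly (in terms of $K$ alone), bound $k_j$ below (using $\d'/\d$), and then combine.

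\emph{Uniform bound on $l_i$.} The arcs of $\P'$ overlapping $\G_i$ are consecutive along $\G$, and all but at most two are entirely contained in $\G_i$; the two exceptions are those containing the endpoints of $\G_i$. The union of the remaining $l_i-2$ arcs forms a subarc $\G_i^{**}\subset\G_i$ of diameter at most $\d\diam{\G}$, and is partitioned into $l_i-2$ arcs each of diameter at least $(\d'/2)\diam{\G}$. If $\diam{\G_i^{**}}<(\d'/2)\diam{\G}$ then $l_i-2=0$; otherwise, since every subarc of a $K$-quasiarc satisfies the $2$-point condition with the same constant, Lemma \ref{lem:qcircles} applied to $\G_i^{**}$ with $\lambda=(\d'/2)\diam{\G}/\diam{\G_i^{**}}\in(0,1]$ gives
\[ l_i-2\;\le\;\frac{C}{\lambda^2}\;\le\;4C\Bigl(\frac{\d}{\d'}\Bigr)^2\;\le\;4C, \]
using $\d\le\d'$. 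Setting $L=4C+2$, I obtain $l_i\le L=L(K)$ in all cases.

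\emph{Lower bound on $k_j$ and conclusion.} The subarc $\G_j'$ has diameter at least $(\d'/2)\diam{\G}$ and is covered by the $k_j$ arcs of $\P$ overlapping it, each of diameter at most $\d\diam{\G}$; the triangle inequality yields $k_j\ge\d'/(2\d)$, and trivially $k_j\ge 1$. Combining these estimates,
\[ N'\,\max\Bigl(1,\frac{\d'}{2\d}\Bigr)\;\le\;\sum_j k_j\;=\;\sum_i l_i\;\le\;LN\;\le\;\frac{LN_0}{\d}. \]
If $\d'\ge 2\d$, the maximum equals $\d'/(2\d)$ and the inequality rearranges to $N'\le 2LN_0/\d'$; if $\d'<2\d$ then $1/\d<2/\d'$ and $N'\le LN_0/\d<2LN_0/\d'$. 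In either case $N_1:=2LN_0$ satisfies the conclusion and depends only on $K$ and $N_0$.

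The main difficulty is the uniform bound on $l_i$: Lemma \ref{lem:qcircles} is quadratic in $1/\lambda$, so a priori a single $\G_i$ could meet many $\P'$-arcs, and it is crucial that the scale hypothesis $\d\le\d'$ absorbs the resulting factor $(\d/\d')^2$ into a constant depending solely on $K$. Once this step is done, the rest is routine counting.
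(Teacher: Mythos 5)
Your proof is correct, and it takes a genuinely different route from the paper's. The paper argues by contradiction: it refines the coarse $\d'$-partition $\P'$ to a $\d$-partition $\P^*$ (subdividing each $\G_i'$ at scale $\d$) and then compares $M(\G,\P^*)$ to $M(\G,\P)$ via Lemma~\ref{lem:Mindex}(3), which bounds the ratio of the chord-arc indices of any two $\d$-partitions of a quasiarc. You instead run a direct double-counting argument on incidences between $\P$ and $\P'$: the upper bound $l_i\le L(K)$ follows from Lemma~\ref{lem:qcircles} applied to the internal arcs of $\P'$ trapped inside $\G_i$, the lower bound $k_j\gtrsim\d'/\d$ follows from the chain-diameter estimate, and combining these with $\sum_j k_j=\sum_i l_i$ gives the result. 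Both arguments ultimately rest on the same covering property of quasiarcs (your use of Lemma~\ref{lem:qcircles} versus the paper's Lemma~\ref{lem:Mindex}(3), whose proof also invokes Lemma~\ref{lem:qcircles}), but yours avoids the index $M(\G,\P)$ entirely and makes the dependence of $N_1$ on $K$ and $N_0$ completely explicit. The paper's route has the advantage of reusing the $M$-index machinery that is needed elsewhere; yours is more self-contained and arguably more transparent about where the scale hypothesis $\d\le\d'$ enters, namely in absorbing the factor $(\d/\d')^2$ in the bound on $l_i$.
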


\begin{proof}
We prove the lemma for $N_1 = 5M_1N_0$ where $M_1$ is as in the third part of Lemma \ref{lem:Mindex}. Contrary to the claim, suppose that there exist a number $\d' \in [\d,1) $ and a $\d'$-partition 
$\P' = \{\G_1,\dots,\G_N\}$ such that $N > N_1/\d'$. For each $i=1,\dots,N$ consider a $\frac{\d\diam{\G}}{\diam{\G_i}}$-partition $\P_i$ of $\G_i$; for those $i$ that $\diam{\G_i} \leq \d\diam{\G}$ set 
$\P_{i} = \{\G_i'\}$. Note that $|\P_i| \geq \frac{\diam{\G_i}}{\d\diam{\G}}$ and that $\P^* = \bigcup_{i=1}^N \P_i$ is a $\d$-partition of $\G$. By Lemma \ref{lem:Mindex},
\[ M(\G,\P^*) \geq (|\P_1|+\dots+|\P_N|)\frac{\d}{2} > M_1N_0.\]
But then $M(\G,\P^*)/M(\G,\P) > M_1$ which is a contradiction.
\end{proof}

The following lemma is used in the proof of Theorem \ref{thm:assouad}.

\begin{lem}\label{lem:tree}
Let $\G$ be a Jordan arc and $0<\d <\d' <1$. Suppose that each $\d$-partition of $\G$ has at least $N$ elements for some $N>1$. Then, there exists 
a subarc $\G' \subset \G$ and a $\d'$-partition $\P'$ of $\G'$ with 
$|\P'| \geq N^{\frac{\log{\d'}+1}{\log{(\d\d'/2)}}}$.
\end{lem}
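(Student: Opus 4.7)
The plan is to prove the contrapositive. Suppose every subarc $\G'\subseteq\G$ admits only $\d'$-partitions with fewer than $K$ elements, where $K:=\lceil N^{(\log\d'+1)/\log(\d\d'/2)}\rceil$. I will derive that some $\d$-partition of $\G$ has fewer than $N$ elements, a contradiction.

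First, introduce an auxiliary threshold $T\in(0,1)$ (to be chosen in terms of $\d,\d'$ at the end) and build a rooted tree of subarcs of $\G$: the root is $\G$; every node $\sigma$ with $\diam\sigma>T\diam\G$ has children equal to the elements of some $\d'$-partition of $\sigma$, which contains at most $K-1$ arcs by the standing assumption; and $\sigma$ is declared a leaf as soon as $\diam\sigma\le T\diam\G$. Since each partitioning step shrinks diameter by a factor of at least $\d'/2$, the depth of the tree is at most $D:=\lceil\log T/\log(\d'/2)\rceil$, and hence the total number $L$ of leaves satisfies $L\le(K-1)^D\le K^D$.

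Second, I obtain a matching lower bound on $L$ via the $M$-index of Lemma \ref{lem:Mindex}. The leaves form a partition $\P_L$ of $\G$ with every element of diameter $\le T\diam\G$. Fix a minimum-size $\d$-partition $\P_\d$ of $\G$, so $|\P_\d|=N$, and form the common refinement $\P^*:=\P_L\vee\P_\d$. Because $\G$ is a Jordan arc (hence homeomorphic to an interval), partitions are indexed by ordered endpoints and the combinatorial identity $|\P^*|\le L+N-1$ holds; moreover every element of $\P^*$ is contained in some leaf and thus has diameter $\le T\diam\G$. Combining Lemma \ref{lem:Mindex}(1), which gives $M(\G,\P_\d)\ge N\d/2$, with the refinement monotonicity of part (2), I conclude
\[
N\d/2 \;\le\; M(\G,\P_\d)\;\le\; M(\G,\P^*)\;\le\; |\P^*|\,T\;\le\;(L+N-1)T,
\]
so $L\ge N\d/(2T)-N+1$, which for $T\ll\d$ is comparable to $N\d/(2T)$.

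Finally, combining the two bounds gives $K^D\ge L\gtrsim N\d/(2T)$, and taking logarithms, $\log K\ge (\log N+\log(\d/2T))/D$. Choosing $T$ to satisfy $\log T=\log(\d'/2)\log(\d\d'/2)/(\log\d'+1)$ forces the coefficient of $\log N$ on the right-hand side to equal exactly $(\log\d'+1)/\log(\d\d'/2)$; the remaining additive piece $\log(\d/(2T))/D$ is non-negative because $T$ is far below $\d/2$ in the nontrivial regime $\d'<1/e$ where the lemma's exponent is positive. This gives $K\ge N^{(\log\d'+1)/\log(\d\d'/2)}$, contradicting the definition of $K$. The main obstacle is executing this balancing step cleanly: the tree bound $K^D$ and the $M$-index bound $N\d/(2T)$ pull in opposite directions as $T$ varies, and only the specific value of $T$ above extracts precisely the stated exponent while absorbing the ceiling in $D$ and the subleading $-N+1$ correction.
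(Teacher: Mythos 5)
Your overall strategy mirrors the paper's: argue by contradiction, iterate $\d'$-partitions to build a tree, and play the number of leaves off against the number of elements in a $\d$-partition. However, there are two concrete problems that break the argument.

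First, the depth bound is wrong. You invoke the \emph{lower} bound ``new diameter $\ge (\d'/2)\cdot$ old diameter'' to conclude that the tree depth is at most $D=\lceil\log T/\log(\d'/2)\rceil$, but a lower bound on children's diameters says nothing about how quickly the branches must terminate. What forces termination is the \emph{upper} bound: elements of a $\d'$-partition of $\sigma$ have diameter at most $\d'\diam\sigma$ (not $(\d'/2)\diam\sigma$), so depth is bounded by $\lceil\log T/\log\d'\rceil$. Because $|\log(\d'/2)|>|\log\d'|$, your $D$ is strictly smaller than the true depth bound, so the inequality $L\le(K-1)^D$ does not hold: leaves can sit at depth greater than your $D$. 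This is the same role played by $k_0$ in the paper, chosen as the least integer exceeding $\log(\d/2)/\log\d'$.

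Second, the ``balancing'' step at the end is not executed. Since $D$ is a ceiling, one has $D\ge\log T/\log(\d'/2)$, hence $1/D\le(\log\d'+1)/\log(\d\d'/2)$: the rounding works \emph{against} you, it does not produce the desired coefficient. The paper circumvents exactly this by writing the exponent as $(\log\d'+1)/\log(\d\d'/2)$ rather than $\log\d'/\log(\d\d'/2)$, so the extra $+1$ in the numerator is precisely the slack that absorbs the ceiling on $k_0$; your construction has no analogous built-in slack and you do not supply one. In addition, the leaf lower bound is $L\ge N\d/(2T)-N+1$, and to convert this to a useful bound you need $T$ bounded away from $\d/2$ by a definite factor, which then loses a constant you never account for. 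Finally, your standing hypothesis is that every $\d'$-partition has size $\le K-1$, so the conclusion should be $K-1\ge N^{(\log\d'+1)/\log(\d\d'/2)}$, which does contradict $K=\lceil N^{(\log\d'+1)/\log(\d\d'/2)}\rceil$; the bound $K\ge N^{(\log\d'+1)/\log(\d\d'/2)}$ that you actually state is not a contradiction.

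One further comment: the detour through the $M$-index and the common refinement $\P_L\vee\P_\d$, which introduces the $-N+1$ correction, is unnecessary. The paper instead pushes the tree until \emph{every} leaf has diameter strictly below $(\d/2)\diam\G$, i.e.\ strictly below every element of a $\d$-partition, and then $|\P|\le|\P^*|$ follows by a direct ordering argument on the arc. This sidesteps the refinement bookkeeping entirely.
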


\begin{proof}
Contrary to the lemma, suppose that for each subarc $\sigma\subset\G$, every $\d'$-partition $\P$ of $\sigma$ has $|\P| \leq  N^{\frac{\log{\d'}+1}{\log{(\d\d'/2)}}}$. We construct a partition of $\G$ 
as follows.

Let $k_0 \in \N$ be the smallest integer that is greater than $\frac{\log{(\d/2)}}{\log{\d'}}$. Let $\P_0 = \{\G_1,\dots,\G_{N_0}\}$ be a $\d'$-partition of $\G$. For each $i=1,\dots,N_0$ let 
$\P_i = \{\G_{i1},\dots,\G_{iN_{i}}\}$ be a $\d'$-partition of $\G_i$. Inductively, suppose that $\G_w$ has been defined where $w=i_1 i_2\dots i_k$, $i_j\in\N$ and $k<k_0$. Then let 
$\P_w = \{\G_{w1},\dots,\G_{wN_{w}}\}$ be a $\d'$-partition of $\G_w$. 

Define $\P^* = \bigcup_{w}\P_w$ where the union is taken over all words $w = i_1\dots i_{k_0}$. Note that for each $\G_w \in \P^*$,
\[ \left ( \frac{\d'}{2} \right )^{k_0} \leq \frac{\diam{\G_w}}{\diam{\G}} \leq (\d')^{k_0} < \frac{\d}{2}.\]
Moreover, by our assumptions, for all partitions $\P_w$ defined,
\[ |\P_w| \leq  N^{\frac{\log{\d'}+1}{\log{(\d\d'/2)}}}. \]
Consequently, since $k_0 \leq  \frac{\log{(\d/2)}}{\log{\d'}} +1$,
\[ |\P^*| \leq \left ( N^{\frac{\log{\d'}+1}{\log{(\d\d'/2)}}}\right )^{k_0} < \left ( N^{\frac{\log{\d'}}{\log{(\d\d'/2)}}}\right )^{k_0} \leq N.\]

If $\P$ is a $\d$-partition of $\G$ then, for each $\G' \in \P$ and each $\G_w \in\P^*$, $\diam{\G_w} < \diam{\G'}$. Thus, $|\P| \leq |\P^*| < N$ which is a contradiction.
\end{proof}

\subsection{A weak chord-arc property}\label{sec:alpha-prop}

Note that, if a curve $\G$ is a chord-arc curve then there exists $M_0>1$ such that for all subarcs $\G'\subset\G$, all $\d>0$ and all $\d$-partitions $\P$ of $\G'$, we have $M(\G',\P)<M_0$.

\emph{A curve or arc $\G$ is said to have the weak chord-arc property if there exists $M_0>0$ such that for all subarcs $\G' \subset \G$ with $\diam\G'<1$, there exists a 
$\diam{\G'}$-partition $\P$ of $\G'$ satisfying $M(\G',\P) \leq M_0$.}

In other words, a curve $\G$ is a weak chord-arc curve if there exists $M_0\geq 1$ such that any subarc $\G'$ of $\G$ can be partitioned to at most $2M_0/\diam{\G'}$ subarcs $\G_1,\dots,\G_N$ of 
diameters comparable to $(\diam{\G'})^2$. It is clear that a chord-arc curve is a weak chord-arc curve but the converse fails; see Section \ref{sec:logn}. The third claim of Lemma \ref{lem:Mindex} implies that, for 
quasicircles, the weak chord-arc property is equivalent to a stronger condition.

\begin{lem}\label{lem:strongalpha}
Suppose that $\G$ is a $K$-quasicircle that satisfies the weak chord-arc property with constant $M_0>1$. For each $\a \in (0,1)$ there exists $M_{\a}>1$ depending only on $K,M_0,\a$ such that for all subarcs 
$\G' \subset \G$ of diameter less than $1$ and all $(\diam{\G'})^{\frac{1}{\a} -1}$-partitions $\P$ we have $M(\G',\P) \leq M_{\a}$.
\end{lem}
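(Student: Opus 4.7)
The plan is to split into $\a \geq 1/2$ and $\a < 1/2$. Write $s = \diam \G'$ and $D = s^{1/\a}$, so a $(\diam \G')^{1/\a - 1}$-partition has pieces of diameter in $[D/2, D]$. When $\a \geq 1/2$ one has $s^{1/\a - 1} \geq s$; the weak chord-arc property yields an $s$-partition of $\G'$ with at most $2M_0/s$ pieces, and Lemma~\ref{lem:strongalpha2} applied with $\d' = s^{1/\a - 1}$ immediately bounds every such partition $\P$ by $M(\G', \P) \leq N_1$.

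For $\a < 1/2$ a single pass of weak chord-arc does not reach the target scale, and I would iterate. Set $k = \lceil \log_2(1/\a) \rceil \geq 2$ and $\rho = (1/\a)^{1/k}$, which lies in $(1, 2]$. Build partitions $\P^{(j)}$ of $\G'$ recursively by $\P^{(0)} = \{\G'\}$ and $\P^{(j)} = \bigcup_i \mathcal{Q}_i^{(j)}$, where $\mathcal{Q}_i^{(j)}$ is a $(\diam \G_i^{(j-1)})^{\rho - 1}$-partition of each $\G_i^{(j-1)} \in \P^{(j-1)}$ (which exists because the piece is a Jordan arc). Since $\rho - 1 \in (0, 1]$ and $\diam \G_i^{(j-1)} < 1$, Lemma~\ref{lem:strongalpha2} applied to $\G_i^{(j-1)}$ with its own weak chord-arc partition (of size $\leq 2M_0/\diam \G_i^{(j-1)}$) gives $M(\G_i^{(j-1)}, \mathcal{Q}_i^{(j)}) \leq N_1$. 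Summing yields $M(\G', \P^{(j)}) \leq N_1 \cdot M(\G', \P^{(j-1)})$, and a simple induction using the identity $(\diam \G_i^{(j-1)})^\rho$ for the maximal size of pieces in $\mathcal{Q}_i^{(j)}$ shows that every piece of $\P^{(k)}$ has diameter at most $s^{\rho^k} = D$. After $k$ iterations, $M(\G', \P^{(k)}) \leq N_1^k$ and every piece of $\P^{(k)}$ has diameter $\leq D$.

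The final step compares $\P^{(k)}$ to any $\d$-partition $\P$ of $\G'$ with $\d = s^{1/\a - 1}$. Each piece $\sigma \in \P$ has diameter $\geq D/2$, so by the $2$-point condition of $\G$ the endpoints of consecutive pieces of $\P$ are at pairwise distance $\geq D/(2K')$, with $K'$ depending only on $K$. Any single piece $\tau \in \P^{(k)}$ has diameter $\leq D$, so the doubling property of $\R^2$ ensures that $\tau$ can contain at most $C(K)$ such endpoints and therefore meets at most $C(K) + 1$ pieces of $\P$. Writing each $\sigma \in \P$ as the union (in order along $\G'$) of its intersections with pieces of $\P^{(k)}$, and using that the diameter of a connected union of arcs is bounded by the sum of their diameters,
\[\diam \sigma \leq \sum_{\tau \in \P^{(k)},\, \tau \cap \sigma \neq \emptyset} \diam \tau.\]
Summing over $\sigma$ and switching the order of summation gives
\[M(\G', \P) \leq (C(K) + 1)\, M(\G', \P^{(k)}) \leq (C(K) + 1)\, N_1^k =: M_\a.\]

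The main obstacle is that the iteratively constructed $\P^{(k)}$ is generally not a $\d$-partition, since its pieces can be much smaller than $D/2$; therefore Lemma~\ref{lem:Mindex}(3) cannot be invoked to transfer the $M$-bound from $\P^{(k)}$ to a general $\d$-partition. The resolution is the direct geometric counting in the last step, which uses only the $2$-point condition of $\G$ and the doubling of the plane.
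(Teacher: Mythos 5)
Your proof is correct, and it follows the same basic template as the paper's (split at $\a=1/2$; for small $\a$ iterate the weak chord-arc property via Lemma~\ref{lem:strongalpha2} to reach the target scale $(\diam\G')^{1/\a}$), but the final transfer step is genuinely different. The paper normalizes the subdivision at each stage --- the level-$j$ partition of each piece $\G_w$ has ratio $(\diam\G_0)^{2^{j+1}}/\diam\G_w$, so that all pieces at a given level have the \emph{same} absolute diameter range $[\tfrac12(\diam\G_0)^{2^{j+1}}, (\diam\G_0)^{2^{j+1}}]$. Consequently the paper's final $\P^*$ \emph{is} a bona fide $\d$-partition, and one more application of Lemma~\ref{lem:strongalpha2} transfers the bound to arbitrary $(\diam\G_0)^{1/\a-1}$-partitions. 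You instead take the geometrically more natural ratio $(\diam\G_i)^{\rho-1}$ (with $\rho=(1/\a)^{1/k}\in(1,2]$), which gives exact arrival at scale $s^{1/\a}$ after $k$ steps but spreads the pieces of $\P^{(k)}$ over a range $[2^{-a_k}D, D]$ with $a_k=(1/\a-1)/(\rho-1)$, so $\P^{(k)}$ is not a $\d$-partition. You correctly identify this obstacle and replace the last use of Lemma~\ref{lem:strongalpha2} by a direct overlap count: endpoints of a $\d$-partition $\P$ are pairwise $\gtrsim D/K'$ separated by the $2$-point condition, each $\tau\in\P^{(k)}$ has $\diam\tau\le D$ so meets a bounded number of elements of $\P$, and summing $\diam\sigma\le\sum_{\tau\cap\sigma\ne\emptyset}\diam\tau$ over $\sigma\in\P$ gives $M(\G',\P)\le (C(K)+1)M(\G',\P^{(k)})\le (C(K)+1)N_1^k$. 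This last step is a self-contained geometric argument (using only quasiarc separation and planar doubling) in place of re-invoking Lemma~\ref{lem:strongalpha2}; the constant you obtain depends on $K,M_0,\a$ exactly as required. The two routes are of comparable length; the paper's has the small advantage of staying entirely within the language of $\d$-partitions, while yours makes the covering mechanism behind Lemma~\ref{lem:strongalpha2} explicit and avoids having to keep the subdivision ratios normalized across branches.
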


\begin{proof}
If $\a \in [\frac{1}{2},1)$ then the claim follows immediately from Lemma \ref{lem:strongalpha2} with $M_{\a} = 10M_0M_1$ where $M_1$ is as in Lemma \ref{lem:Mindex}. 

Suppose now that $\a\in (0,\frac{1}{2})$ and let $k_0$ be the smallest integer with $2^{-k_0}\leq \a$. 
Fix a subarc $\G_0 \subset \G$ with $\diam{\G_0} < 1$ and a $\diam{\G_0}$-partition $\P_0 = \{\G_1,\dots,\G_{N_0}\}$ of $\G_0$ with $|\P_0| \leq 2M_0$. For each $i=1,\dots,N_0$ let $\P_i$ be a 
$\frac{(\diam{\G_0})^4}{\diam{\G_i}}$-partition of $\G_i$. The weak chord-arc property of $\G_i$ and Lemma \ref{lem:strongalpha2} imply $|\P_i| \leq 10M_0M_1(\diam{\G_0})^{-2}$. 
Inductively, suppose that $\G_w$ has been defined where $w=i_1\cdots i_k$, $i_j \in\N$ and $k<k_0$. 
Then, let $\P_w = \{\G_{w1},\dots,\G_{wN_w}\}$ be a $\frac{(\diam{\G_0})^{2^{k+1}}}{\diam{\G_w}}$-partition of $\G_w$. Again, note that $|\P_w| \leq 10M_0M_1(\diam{\G_0})^{-2^k}$.

Define $\P^* = \bigcup_w\P_w$ where the union is taken over all words $w=i_1\cdots i_{k_0}$ constructed as above. Note that $\P^*$ is a $(\diam{\G_0})^{2^{k_0}-1}$-partition of $\G_0$ with 
\[ M(\G_0,\P^*) \leq  \frac{(\diam{\G_0})^{2^{k_0}}}{\diam{\G_0}} \sum_{w=i_1\cdots i_{k_0}}|\P_w| \leq (10M_0M_1)^{k_0}. \]
Since $(\diam{\G_0})^{1/\a} \geq (\diam{\G_0})^{2^{k_0}} \geq \diam{\G_w}$ for all $\G_w \in \P^*$, it follows from Lemma \ref{lem:strongalpha2} that every $(\diam{\G'})^{\frac{1}{\a} -1}$-partition $\P$ of $\G_0$ 
satisfies $M(\G_0,\P) \leq M_{\a}$ with $M_{\a}= (10M_0M_1)^{k_0+1}$.
\end{proof}

By interchanging the roles of $\frac{1}{2}$ and $\a$, and applying the arguments in the proof of Lemma \ref{lem:strongalpha}, the following converse can be obtained.

\begin{rem}\label{rem:strongalpha}
Suppose that $\G$ is a $K$-quasicircle and that there exist $\a\in(0,1)$ and $M_{\a}>1$ such that every subarc $\G' \subset \G$ with $\diam{\G'} < 1$ has a $(\diam{\G'})^{\frac{1}{\a} -1}$-partition $\P$ satisfying 
$M(\G',\P) \leq M_{\a}$. Then $\G$ has the weak chord-arc property.
\end{rem}

\section{Proofs of the main results}\label{sec:mainresults}

The proof of Theorem \ref{thm:main} is given in Section \ref{sec:alpha2reg} and Section \ref{sec:LQC&Vais}. By the theorem of Bonk and Kleiner, it is clear that \emph{(2)} implies \emph{(1)}. 

To show that \emph{(3)} implies \emph{(2)} we first show in Lemma \ref{lem:qcirclenec} that if $\Omega$ satisfies the LQC property then $\S(\Omega,t^{\a})$ is LLC for all $\a\in(0,1)$. Then, in Proposition \ref{prop:2reg} 
we prove that if $\Omega$ has the LQC property and $\partial\Omega$ is a weak chord-arc curve then $\S(\Omega,t^{\a})$ is $2$-regular. 

To prove that \emph{(1)} implies \emph{(3)}, we show in Lemma \ref{lem:qcirclenec} that if $\S(\Omega,t^{\a})$ is LLC for some $\a\in(0,1)$ then $\Omega$ satisfies the LQC property. Then, in Proposition \ref{prop:N(w,h)} we 
show that if $\Omega$ has the LQC property and $\S(\Omega,t^{\a})$ is quasisymmetric to $\mathbb{S}^2$ then $\partial\Omega$ is a weak chord-arc curve.

The proof of Theorem \ref{thm:assouad} is given in Section \ref{sec:assouad}.

\subsection{Ahlfors $2$-regularity}\label{sec:alpha2reg}

The following proposition connects the weak chord-arc property of $\partial\Omega$ with the $2$-regularity of $\S(\Omega,t^{\a})$.

\begin{prop}\label{prop:2reg}
Suppose that $\Omega$ has the LQC property, $\partial\Omega$ has the weak chord-arc property and $\a\in(0,1)$. Then, $\S(\Omega,t^{\a})$ is $2$-regular.
\end{prop}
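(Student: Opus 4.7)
The plan is to show the two-sided estimate $\mathcal{H}^2(\Sigma(\Omega,t^\a)\cap B(a,r))\simeq r^2$ uniformly in $a\in\Sigma(\Omega,t^\a)$ and $0<r\leq \diam\Sigma(\Omega,t^\a)$. Write $a=(\pi(a),z_0)$ and set $\rho=\dist(\pi(a),\partial\Omega)$, so $|z_0|=\rho^\a$. Outside $\partial\Omega\times\{0\}$, each half of $\Sigma$ is the graph of $\pm h(x)=\pm\dist(x,\partial\Omega)^\a$, with area element $\sqrt{1+|\nabla h|^2}$ and $|\nabla h(x)|=\a\dist(x,\partial\Omega)^{\a-1}$ almost everywhere. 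I would first dispose of the easy regime $|z_0|\geq 2r$: the ball lies in one graph, $\dist(x,\partial\Omega)\simeq\rho$ on the projection, and a direct computation yields $\mathcal{H}^2(B(a,r)\cap\Sigma)\simeq r^2$. The remaining regime $|z_0|<2r$ (so $\rho\leq (2r)^{1/\a}\ll r$ for small $r$; large scales are absorbed into constants) is where the weak chord-arc property enters.

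For the upper bound in the delicate regime, let $p\in\partial\Omega$ be closest to $\pi(a)$ and let $\sigma\subset\partial\Omega$ be the subarc near $p$ whose points lie within $3r$ of $p$; by the quasicircle property of $\partial\Omega=\g_0$, $\diam\sigma\simeq r$. Invoke Lemma \ref{lem:strongalpha} with exponent $\a$ to obtain a $(\diam\sigma)^{1/\a-1}$-partition $\P=\{\sigma_1,\ldots,\sigma_N\}$ of $\sigma$ with $M(\sigma,\P)\leq M_\a$, which forces $\diam\sigma_i\simeq r^{1/\a}$ and $N\lesssim r^{1-1/\a}$. The projection of $B(a,r)\cap\Sigma$ is covered by $\bigcup_i R_i$ where $R_i=\{x\in\overline\Omega:\dist(x,\sigma_i)\leq cr^{1/\a}\}\subset B(y_i,Cr^{1/\a})$, so $|R_i|\lesssim r^{2/\a}$. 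Decomposing $R_i$ into dyadic layers in $d=\dist(\cdot,\partial\Omega)$ and using $|\nabla h|\simeq d^{\a-1}$:
\begin{equation*}
\int_{R_i}\sqrt{1+|\nabla h|^2}\,dA\lesssim\sum_{k\geq 0}(2^{-k}r^{1/\a})^{\a-1}\cdot 2^{-k}r^{2/\a}\lesssim r^{1+1/\a}.
\end{equation*}
Summing over $i$ and doubling for the two halves yields $\mathcal{H}^2(B(a,r)\cap\Sigma)\lesssim N\cdot r^{1+1/\a}\lesssim r^2$.

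For the matching lower bound, select $\sigma_0$ to be the connected component of $\partial\Omega\cap B(p,r/10)$ containing $p$, with endpoints $q_1,q_2$ on $\partial B(p,r/10)$. For each $s\in(0,(r/2)^{1/\a})$, Lemma \ref{lem:levelsubarc} identifies $\sigma_{0,s}=\{z\in\Omega:\dist(z,\sigma_0)=\dist(z,\partial\Omega)=s\}$ as a subarc of the quasicircle $\g_s$. Using the LQC property (which makes the nearest-point map $\g_s\to\partial\Omega$ well-behaved), this subarc contains a point $p_s$ whose unique nearest boundary point is $p$, so $|p_s-p|=s$; its endpoints lie within $s$ of $q_1,q_2$. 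Since $s\leq (r/2)^{1/\a}\ll r$, a triangle-inequality argument gives $\diam\sigma_{0,s}\geq |p_s-q_i|\geq r/10-s\gtrsim r$, hence $\mathcal{H}^1(\sigma_{0,s})\geq\diam\sigma_{0,s}\gtrsim r$. Moreover $\sigma_{0,s}\subset B(\pi(a),r/2)$ and each lift $(z,s^\a)$ satisfies $|a-(z,s^\a)|<r$. The coarea formula for the $1$-Lipschitz function $\dist(\cdot,\partial\Omega)$ then gives
\begin{equation*}
\mathcal{H}^2(B(a,r)\cap\Sigma)\geq\int_0^{(r/2)^{1/\a}}\!\sqrt{1+\a^2 s^{2(\a-1)}}\,\mathcal{H}^1(\sigma_{0,s})\,ds\gtrsim r\int_0^{(r/2)^{1/\a}}\!\a s^{\a-1}\,ds\simeq r^2.
\end{equation*}

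The main obstacle is the upper bound: the weak chord-arc hypothesis gives no control on the length of subarcs of $\partial\Omega$ (which can be infinite), so length-based covering arguments are unavailable. The key is that Lemma \ref{lem:strongalpha} yields a diameter-based partition with the exact scaling $r^{1/\a}$ that matches the horizontal scale at which the height function reaches $r$, and the dyadic integration in the distance variable is just barely integrable because $\a>0$ converts $d^{\a-1}$ into the finite $d^\a$. For the lower bound, the delicate point is producing $p_s\in\sigma_{0,s}$ with $p$ as nearest boundary point, which relies on the level quasicircle structure from the LQC hypothesis.
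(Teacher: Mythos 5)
Your approach is genuinely different from the paper's. The paper works directly with Hausdorff measure on ``square pieces,'' reducing the upper bound to showing that level sets restricted to suitable subregions are chord-arc curves, and in the difficult regime it iterates the weak chord-arc partition at scales $(\diam\G_0)^{1/\a^k}$, getting convergence from the super-exponential decay $(\diam\G_0)^{1/\a^k}$ against the exponential growth of the constants. You instead use the graph parametrization with area element $\sqrt{1+|\nabla h|^2}\simeq d^{\a-1}$ and try to integrate this against a dyadic decomposition in $d$. Your lower bound via the coarea formula for $d\circ\pi$ is clean and essentially correct (modulo routine adjustments to the radii so that the lifts of $\sigma_{0,s}$ really land in $B(a,r)$).

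The gap is in the upper bound. The step
\[
\int_{R_i}\sqrt{1+|\nabla h|^2}\,dA\lesssim\sum_{k\geq 0}(2^{-k}r^{1/\a})^{\a-1}\cdot 2^{-k}r^{2/\a}
\]
assumes that the $k$-th dyadic layer $L_k=\{x\in R_i: d(x)\simeq 2^{-k}r^{1/\a}\}$ has area $\lesssim 2^{-k}r^{2/\a}$. By the coarea formula that is equivalent to $\mathcal{H}^1(\g_s\cap R_i)\lesssim r^{1/\a}$ uniformly for $s\lesssim r^{1/\a}$, i.e.\ that the level sets near $\sigma_i$ have length comparable to $\diam\sigma_i$ \emph{at every scale}. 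That is precisely the hard part of the proposition: it is false for general Jordan domains, and the weak chord-arc property only delivers it with a sub-polynomial (poly-logarithmic in $k$) loss after iterating the partition down through scales $t, t^2, t^4,\dots$ --- exactly the iteration the paper carries out. Without some geometric decay of $|L_k|$ the dyadic sum diverges: using only the trivial bound $|L_k|\leq |R_i|\lesssim r^{2/\a}$ gives $r^{1+1/\a}\sum_k 2^{k(1-\a)}=\infty$. So the single application of Lemma~\ref{lem:strongalpha} to produce the coarse partition $\{\sigma_i\}$ is not enough; you need to partition each $\sigma_i$ recursively (or invoke the Assouad-dimension consequence, Theorem~\ref{thm:assouad}, with an $\e$-loss) to control the level-set lengths at every sub-scale $s<r^{1/\a}$, and then verify that the resulting correction factor is still summable against $2^{-k\a}$. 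As written, the central estimate is asserted rather than proved, and it is the one place the weak chord-arc hypothesis must actually do work.
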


For the rest of Section \ref{sec:alpha2reg} we assume that there exist $C_0>1$ and $\e_0\in(0,\frac{1}{8C_0})$ such that for any $\e \in [0,\e_0]$, the level set $\g_{\e}$ is a quasicircle and satisfies (\ref{eq:3pts}) for 
some $C_0>1$. To show (\ref{eq:2regdefn}) we first apply some reductions on $a\in\S(\Omega,t^{\a})$ and $r>0$.

\medskip

\emph{Reduction 1.} Let $\S(\Omega,t^{\a})^{+} = \S(\Omega,t^{\a})\cap\R^3_+$. The symmetry of $\S(\Omega, t^{\a})$ with respect to $\R^2\times \{0\}$ implies that it is enough to verify
\begin{equation}\label{eq:Ahlforsreg}
C^{-1} r^2  \le \mathcal{H}^2(B^3(a,r)\cap\S(\Omega,t^{\a})^+) \leq C r^2,
\end{equation}
for some $C>1$ and for all $a\in \S(\Omega,t^{\a})^+$ and $r>0$ sufficiently small.

\medskip

\emph{Reduction 2.} We claim that it is enough to verify (\ref{eq:Ahlforsreg}) only for those points $a\in \S(\Omega,t^{\a})^+$ whose projection satisfies $\dist(\pi(a),\partial\Omega)\leq\e_0$ and for $r<\e_0/3$. 
Indeed, following the notation in \cite{VW2}, let $\D_{\e_0/3}$ be the set of all points in $\Omega$ whose distance from $\partial \Omega$ is greater than  $\e_0/3$ and  $\D_{\e_0/3}^+$ be the subset of 
$\S(\Omega,t^{\a})^+$ whose projection on $\R^2\times\{0\}$ is $\D_{\e_0/3}$. Since $\partial\D_{\e_0/3} = \g_{\e_0/3}$ is a quasicircle, the domain $\D_{\e_0/3}$ is $2$-regular. Therefore, the surface $\D_{\e_0/3}^+$, which is 
the graph of a Lipschitz function on  $\D_{\e_0/3}$, is $2$-regular as well. Thus, if $a\in \S(\Omega,t^{\a})^+$ satisfies $\dist(\pi(a),\partial\Omega)>\e_0$ and $r\in(0,\e_0/3)$ then 
$B^3(a,r)\cap\S(\Omega,t^{\a})^+ \subset \D_{\e_0/3}^+$ and $\mathcal{H}^2(B^3(a,r)\cap\S(\Omega,t^{\a})^+)\simeq r^2$. For the rest we require that $0< r \leq \min\{\e_0/3 ,\frac{1}{180 C_0^2}\diam{\Omega}\}$.

\medskip

\emph{Reduction 3.} It is enough to check the Hausdorff $2$-measure of certain surface pieces on $\S(\Omega,t^{\a})$ defined as follows. Suppose that $x_1,y_1,x_2,y_2$ are points in $\S(\Omega,t^{\a})^+$ satisfying the 
following properties:
\begin{enumerate}
\item[(i)] $\pi(x_1), \pi(y_1)\in \g_{t_1} \text{ and } \pi(x_2), \pi(y_2) \in \g_{t_2} \text{ for some }0\leq t_2<t_1\leq \e_0,$
\item[(ii)] $|\pi(x_1) - \pi(x_2)| = \dist(\pi(x_1),\g_{t_2})=  |\pi(y_1)-\pi(y_2)| =\dist(\pi(y_1),\g_{t_2})=  t_1-t_2,$
\item[(iii)] $\frac{1}{20 C_0}|x_1-y_1| \leq t_1-t_2+ t_1^{\a}- t_2^{\a} \leq \frac{1}{3}|x_1-y_1| \leq \frac{1}{10C_0}\diam{\Omega}$,
\end{enumerate}
Property (i) implies that for each $i=1,2$, $x_i$ is on the same horizontal plane as $y_i$. Property (ii) implies that $\pi(x_2),\pi(y_2)$ are the points of $\g_{t_2}$ which are closest to 
$\pi(x_1),\pi(y_1)$ respectively. Property (iii) implies that $|x_i-y_j|$, $|x_1-x_2|$ and $|y_1-y_2|$ are all comparable and sufficiently small. Denote with $D = D(x_1,y_1,x_2,y_2)$ the piece on $\S(\Omega,t^{\a})^+$ whose 
projection on $\R^2$ is the quadrilateral bounded by $[\pi(x_1),\pi(x_2)]$, $\g_{t_1}(\pi(x_1),\pi(y_1))$, $[\pi(y_1),\pi(y_2)]$, $\g_{t_2}(\pi(x_2),\pi(y_2))$. We call $D$ a \emph{square piece} on $\S(\Omega,t^{\a})^+$.

The following lemma is a corollary of \cite[(6.4)]{VW2} and \cite[Remark 6.1]{VW2}.

\begin{lem}\label{lem:square}
There exist $C_1,C_2 >1$ depending on $C_0,\e_0,\diam{\Omega}$ such that
\begin{enumerate}
\item $\diam{D} \leq C_1|x_1-y_1|$ for every square piece $D = D(x_1,y_1,x_2,y_2)$,
\item for all $a \in \S(\Omega,t^{\a})^+$ and $r>0$ as above, there exist square pieces $D_1$ and $D_2$ such that $1 < \diam{D_2}/\diam{D_1} \leq C_2$ and
\[D_1 \subset B^3(a,r)\cap \S(\Omega,t^{\a})^+ \subset D_2 .\]
\end{enumerate}
\end{lem}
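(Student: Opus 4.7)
For part (1), I would parametrize $D$ as the graph of the height function $x \mapsto \dist(x,\partial\Omega)^{\alpha}$ over the quadrilateral $Q := \pi(D)$, and split $\diam D$ into a horizontal part ($\diam Q$) and a vertical part (the oscillation of $\dist(\cdot,\partial\Omega)^{\alpha}$ on $Q$). Horizontally, $\diam Q \leq \diam \partial Q$, and each of the four boundary pieces is controlled: the 2-point condition for $\gamma_{t_i}$ (a $C_0$-quasicircle by the LQC hypothesis) bounds the two level-arc pieces by $C_0 |\pi(x_i)-\pi(y_i)|$, condition (iii) bounds the two connecting segments by $\tfrac{1}{3}|x_1-y_1|$, and the triangle inequality on $[\pi(x_1),\pi(x_2)] \cup \gamma_{t_1}(\pi(x_1),\pi(y_1)) \cup [\pi(y_1),\pi(y_2)]$ gives $|\pi(x_2)-\pi(y_2)| \lesssim |x_1-y_1|$. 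For the vertical part, observe that $\dist(\cdot,\partial\Omega)$ is constantly $t_i$ on each level arc and interpolates linearly between $t_1$ and $t_2$ on the segments $[\pi(x_i),\pi(x_2)]$; since $\gamma_{t_1}$ is enclosed by $\gamma_{t_2}$, their separating annular region is exactly $\{t_2 \leq \dist(\cdot,\partial\Omega) \leq t_1\}$, and the simply-connected wedge $Q$ cut off by $\partial Q$ inside this annulus must sit in the strip. Hence the $z$-coordinate on $D$ ranges in $[t_2^{\alpha}, t_1^{\alpha}]$, an interval of length $\leq \tfrac{1}{3}|x_1-y_1|$ by (iii). Combining yields $\diam D \leq C_1|x_1-y_1|$.

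For part (2), given $a \in \S(\Omega,t^{\alpha})^+$ with $t_a := \dist(\pi(a),\partial\Omega) \leq \e_0$ and $r$ in the admissible range, I would construct $D_1$ and $D_2$ by the same recipe at two distinct scales. Set $\phi(s_1,s_2) := (s_1-s_2) + (s_1^{\alpha}-s_2^{\alpha})$ and pick heights $t_1^{(i)} \geq t_a \geq t_2^{(i)}$ for $i=1,2$ solving $\phi(t_1^{(i)},t_2^{(i)}) \simeq \lambda_i r$, where $\lambda_1 \ll \lambda_2$ are two absolute constants (taking $t_2^{(i)}=0$ when $t_a$ is too small). Then choose the defining points $\pi(x_1^{(i)}), \pi(y_1^{(i)}) \in \gamma_{t_1^{(i)}}$ on opposite sides of the closest-point projection of $\pi(a)$ onto $\gamma_{t_1^{(i)}}$ with $|x_1^{(i)}-y_1^{(i)}| \simeq \lambda_i r$ and condition (iii) satisfied. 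With $\lambda_1$ small enough, every point of $D_1$ is within Euclidean distance $C_1|x_1^{(1)}-y_1^{(1)}| \leq r$ of $a$ by part (1), giving $D_1 \subset B^3(a,r) \cap \S(\Omega,t^{\alpha})^+$. With $\lambda_2$ large enough, any point of $B^3(a,r) \cap \S(\Omega,t^{\alpha})^+$ projects to a point of $Q_2$, so $B^3(a,r) \cap \S(\Omega,t^{\alpha})^+ \subset D_2$. The ratio $\diam D_2/\diam D_1 \leq C_2$ then follows from part (1) and $\lambda_2/\lambda_1$.

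The main obstacle is the second part in the regime where $t_a$ is very small compared to $r^{1/\alpha}$, where $\S(\Omega,t^{\alpha})$ is cusp-like near $\partial\Omega \times \{0\}$. In this regime $t_1^{(i)}$ scales like $r^{1/\alpha}$ rather than like $r$, and a Euclidean ball of radius $r$ about $a$ reaches the cusp ridge $\partial\Omega \times \{0\}$, so $B^3(a,r) \cap \S(\Omega,t^{\alpha})^+$ may project to a region whose shape differs substantially from a metric ball on $\gamma_{t_a}$. Ensuring that $D_2$ still captures the projection, and that $D_1$ still embeds in the ball, uniformly in this regime is where the quantitative surface-ball estimates (6.4) and Remark 6.1 of \cite{VW2} enter: they describe balls in $\S(\Omega,t^{\alpha})$ intrinsically in terms of level-set parameters and reduce the present inclusions to an algebraic comparison between $\phi(t_1^{(i)},t_2^{(i)})$ and $r$.
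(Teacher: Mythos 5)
The paper does not actually prove this lemma: it is stated as a direct corollary of equation (6.4) and Remark 6.1 of [VW2], with no argument given. Your proposal attempts to reconstruct a proof from scratch, so the comparison is really between your sketch and the referenced estimates.

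Your treatment of part (1) is essentially correct and self-contained. Splitting $\diam D$ into the horizontal diameter of $Q=\pi(D)$ and the vertical oscillation $t_1^{\alpha}-t_2^{\alpha}$ is the right decomposition; the $2$-point condition on the level sets, the triangle inequality around $\partial Q$, and condition (iii) give $\diam Q \lesssim |x_1-y_1|$, and (iii) directly bounds the vertical spread. The one step you gloss over is why $Q$ is contained in the annulus $\{t_2 \le \dist(\cdot,\partial\Omega) \le t_1\}$: you need $\partial Q$ to lie in the annulus (which does follow, since $\dist(\cdot,\partial\Omega)$ is exactly affine along the connecting segments by the calculation $\dist(z_s,\partial\Omega)=st_1+(1-s)t_2$) together with a null-homotopy argument for the interior. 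This is fine to leave implicit.

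Part (2) has a genuine gap, and you identify it yourself. Choosing heights via $\phi(t_1,t_2)=(t_1-t_2)+(t_1^{\alpha}-t_2^{\alpha})\simeq \lambda_i r$ and spreading the endpoints laterally is the right recipe, but the two inclusions you need are only asserted, not derived. For $D_1\subset B^3(a,r)$ you invoke part (1) to say every point of $D_1$ is within $C_1|x_1^{(1)}-y_1^{(1)}|$ of $a$, but part (1) bounds $\diam D_1$, not $\dist(\cdot,a)$; you still need $a\in D_1$, which requires verifying that $t_a\in[t_2^{(1)},t_1^{(1)}]$ is achievable while keeping condition (iii), and that $\pi(a)$ lies in the lateral extent of $Q_1$. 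Closest-point projection onto $\g_{t_1^{(1)}}$ need not be single-valued, and well-posedness of conditions (ii)--(iii) depends on the LQC geometry. For $B^3(a,r)\cap\S^+\subset D_2$ you need to rule out leakage of the surface ball beyond $Q_2$, which in the cusp regime ($t_a\ll r^{1/\alpha}$) is precisely the nontrivial estimate. These are exactly the facts packaged in [VW2, (6.4)] and [VW2, Remark 6.1], which you end up citing. So your proof is not a different route from the paper's: it is the same route made partly explicit, with the crux (the containments) left as a reference. To make part (2) self-contained you would need to prove the surface-ball comparisons from scratch, which is the content of Section 6 of [VW2].
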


Thus, by (iii), choosing $\e_0$ small enough, we may assume from now on that $\diam{D} \leq (4C_0)^{-1}$ for all square pieces $D$. By Lemma \ref{lem:square} and the discussion above, Proposition \ref{prop:2reg} is now equivalent to the 
following lemma. 

\begin{lem}\label{lem:2regsquare}
There exists $C>1$ depending on $C_0$ and the weak chord-arc constant $M_0$ such that
\begin{equation}\label{eq:2regsquare}
C^{-1}(\diam{D})^2 \leq \mathcal{H}^2(D) \leq C(\diam{D})^2
\end{equation}
for all square pieces $D$ on $\S(\Omega,t^{\a})^+$ defined as above.
\end{lem}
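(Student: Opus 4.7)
The plan is to apply the coarea formula to express $\mathcal{H}^2(D)$ as a one-dimensional integral over the distance-to-boundary parameter, then bound the integrand from both sides using the weak chord-arc property of $\partial\Omega$ together with the geometry of the square piece. Writing $D$ as the graph of $h(\xi) = \dist(\xi,\partial\Omega)^\alpha$ over its projection $\pi(D)$, and using $|\nabla \dist(\cdot,\partial\Omega)| = 1$ a.e., the area and coarea formulas give
\[
\mathcal{H}^2(D) \;\simeq\; \int_{t_2}^{t_1} w(s)\,(1 + \alpha s^{\alpha-1})\,ds, \qquad w(s) := \mathcal{H}^1(\gamma_s \cap \pi(D)).
\]
Since $\int_{t_2}^{t_1}(1 + \alpha s^{\alpha-1})\,ds = (t_1-t_2) + (t_1^\alpha-t_2^\alpha) \simeq \diam D$ by property (iii) and Lemma \ref{lem:square}, it suffices to show $w(s) \simeq \diam D$ uniformly for $s \in [t_2, t_1]$.

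The lower bound on $w(s)$ is straightforward. Let $x_s, y_s$ be the endpoints of $A_s := \gamma_s \cap \pi(D)$, lying respectively on the radial segments $[\pi(x_1),\pi(x_2)]$ and $[\pi(y_1),\pi(y_2)]$. With $h := |x_1 - y_1| = |\pi(x_1) - \pi(y_1)|$, property (iii) gives $t_1 - t_2 \leq h/3$, so the triangle inequality yields
\[
|x_s - y_s| \;\geq\; h - 2(t_1 - s) \;\geq\; h - 2(t_1 - t_2) \;\geq\; h/3 \;\simeq\; \diam D,
\]
and therefore $w(s) \geq |x_s - y_s| \gtrsim \diam D$.

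The upper bound is the main point, and is where the weak chord-arc property enters. Let $\sigma_s \subset \partial\Omega$ be the subarc formed by collecting the closest-boundary-points of $A_s$ (a subarc by Lemma \ref{lem:levelsubarc}); then $\diam\sigma_s \leq \diam A_s + 2s \leq \diam D + 2s$. Using Lemma \ref{lem:strongalpha} with an auxiliary exponent chosen so that the partition scale becomes $s/10$, the weak chord-arc property of $\partial\Omega$ furnishes a partition $\{\sigma_i\}$ of $\sigma_s$ into $N_s \lesssim \diam\sigma_s / s$ subarcs each of diameter less than $s/10$. For each $i$, Lemma \ref{lem:levelcalemma} makes $\sigma_i' := \{x \in \R^2 : \dist(x,\sigma_i)=s\}$ chord-arc with universal constant. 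Since $A_s \cap \sigma_i'$ is a subarc of the chord-arc curve $\sigma_i'$, its length is bounded by $C \min(\diam A_s, \diam\sigma_i') \lesssim \min(\diam D, s)$; summing over $i$ yields $w(s) \lesssim N_s \cdot \min(\diam D, s) \lesssim \diam D$, regardless of whether $s$ exceeds $\diam D$.

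Combining the two bounds gives $\mathcal{H}^2(D) \simeq \diam D \cdot ((t_1-t_2)+(t_1^\alpha-t_2^\alpha)) \simeq (\diam D)^2$. The main obstacle is the upper bound: the weak chord-arc property is a property of $\partial\Omega$ only (the level curves $\gamma_s$ are just quasicircles, not chord-arc), so the argument must transfer a boundary partition at scale $s$ to the level set via the local-chord-arc Lemma \ref{lem:levelcalemma}, and must account for each local contribution by the tight $\min(\diam D, s)$ rather than the naive $s$; otherwise the integral produces only $\diam D$ instead of the desired $(\diam D)^2$.
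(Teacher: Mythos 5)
Your coarea reduction $\mathcal{H}^2(D)\simeq\int_{t_2}^{t_1}w(s)\,(1+\alpha s^{\alpha-1})\,ds$ is fine, as is the lower bound on $w(s)$. The gap is in the claimed uniform upper bound $w(s)\lesssim\diam D$, which is simply false under the hypotheses of the lemma. The weak chord-arc property allows $\partial\Omega$ to be non-rectifiable (this is the whole point of the paper — see the snowflake example in Section~\ref{sec:logn}), and for such a domain, taking $t_2=0$, one has $w(s)=\mathcal H^1(\gamma_s\cap\pi(D))\to\mathcal H^1(\partial\Omega\cap\pi(D))=\infty$ as $s\to 0$. So no uniform bound on $w(s)$ is possible, and the coarea integral cannot be controlled by a pointwise estimate of the slices.

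The specific step that breaks is your invocation of Lemma~\ref{lem:strongalpha} ``with an auxiliary exponent chosen so that the partition scale becomes $s/10$.'' The constant $M_\alpha$ in that lemma depends on $\alpha$, and from its proof $M_\alpha\simeq(10M_0M_1)^{k_0}$ with $k_0\simeq\log_2(1/\alpha)$, so $M_\alpha\to\infty$ as $\alpha\to 0$. To produce a partition of $\sigma_s$ (whose diameter is $\simeq\diam D$ when $s\lesssim\diam D$) into pieces of diameter $\lesssim s$, you must take $\alpha\simeq\log(\diam\sigma_s)/\log s$, which tends to $0$ as $s\to 0$ while $\diam D$ is held fixed. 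Thus your $N_s\lesssim\diam\sigma_s/s$ carries a hidden $s$-dependent constant that diverges as $s\to 0$, and the bound $w(s)\lesssim\diam D$ is not uniform. This is precisely why the paper cannot argue slice-by-slice and instead performs a recursive ``dyadic-in-depth'' decomposition of $D$: in the paper's Case~2, the subarc $\G_0$ is partitioned repeatedly at scales $(\diam\G_0)^{1/\alpha^k}$, producing pieces $E_{i_1\cdots i_k}$ whose individual measures (controlled via Lemma~\ref{lem:estimation}, applicable because within each $E_w$ the level arcs are chord-arc with a fixed constant) decay geometrically; summing the geometric series gives the upper bound even though the level-set lengths near $\partial\Omega$ are unbounded. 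Your pointwise approach does work (and is essentially what the paper's Case~1 does) in the regime where all of $[t_2,t_1]$ stays above $(\diam\G_0)^{1/\alpha}$; the missing case is when $t_2$ is small compared with that threshold, and there a genuinely different idea is required.
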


The lower bound in (\ref{eq:2regsquare}) was shown in \cite[Section 6.2.1]{VW2}. For the upper bound, the following lemma is used; we only give a sketch of its proof since it is similar to the 
discussion in \cite[Section 6.2.1]{VW2}.

\begin{lem}\label{lem:estimation}
Let $S$ be a closed subset of $\S(\Omega,t^{\a})^+$ and $0\leq t_2\leq t_1$ be such that $\pi(S)$ intersects with $\g_t$ if and only if $t\in[t_2,t_1]$. Suppose that, for all $t,t'\in[t_2,t_1]$, the Hausdorff distance 
between $\pi(S)\cap\g_t$ and $\pi(S)\cap\g_{t'}$ is less than $c_1|t-t'|$ for some $c_1>1$, and $\pi(S)\cap\g_t$ is a $c_2$-chord-arc curve for some $c_2>1$. Then $\mathcal{H}^2(S) \leq C (\diam{S})^2$ 
for some $C$ depending on $c_1,c_2$.
\end{lem}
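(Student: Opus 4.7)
The plan is to build an explicit $\d$-scale covering of $S$ and sum its squared diameters, mirroring the calculation sketched in \cite[\S 6.2.1]{VW2}. Partition $[t_{2},t_{1}]$ by points $t_{k}=t_{2}+k\d$, $k=0,1,\ldots$ At each level $t_{k}$, since $\pi(S)\cap\g_{t_{k}}$ is a $c_{2}$-chord-arc curve of diameter at most $\diam(S)$, its length is at most $c_{2}\diam(S)$ and I partition it into $n_{k}\le c_{2}\diam(S)/\d$ subarcs of diameter at most $\d$.

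The Hausdorff distance hypothesis then implies that the vertical strip of $S$ above each such subarc has horizontal extent $O((1+c_{1})\d)$ and vertical extent $t_{k+1}^{\a}-t_{k}^{\a}\le \a t_{k}^{\a-1}\d$ (by concavity of $t^{\a}$, for $k\ge 1$). Covering each strip by $O(1+\a t_{k}^{\a-1})$ balls of diameter $2(1+c_{1})\d$, the total contribution to $\sum(\diam)^{2}$ at level $k$ is $\lesssim c_{2}\diam(S)\bigl(1+\a t_{k}^{\a-1}\bigr)\d$. Summing over $k$ and letting $\d\to 0$,
\[
\mathcal{H}^{2}(S) \;\lesssim\; c_{2}\diam(S)\int_{t_{2}}^{t_{1}}\bigl(1+\a t^{\a-1}\bigr)dt \;=\; c_{2}\diam(S)\bigl[(t_{1}-t_{2})+(t_{1}^{\a}-t_{2}^{\a})\bigr].
\]
Both $t_{1}-t_{2}$ and $t_{1}^{\a}-t_{2}^{\a}$ are bounded by $\diam(S)$, so $\mathcal{H}^{2}(S)\le C(c_{1},c_{2})(\diam S)^{2}$.

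The main obstacle I expect is verifying that these vertical strips really cover $S$ with bounded overlap. The Hausdorff distance hypothesis supplies only a closest-point correspondence between $\pi(S)\cap\g_{t}$ and $\pi(S)\cap\g_{t_{k}}$ for $t\in[t_{k},t_{k+1}]$, not a bi-Lipschitz matching of subarcs, so a short combinatorial argument leveraging the chord-arc structure of the level curves is needed to control the overlap multiplicity before the counting above is rigorous. A minor additional point is the first strip when $t_{2}=0$, where $\a t_{k}^{\a-1}$ is undefined at $k=0$; its vertical extent is $\d^{\a}$ and its contribution to $\sum(\diam)^{2}$ is $O(\d^{\a})$, which vanishes as $\d\to 0$, so this case is handled separately but is harmless. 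Once these bookkeeping points are in place, the integral estimate yields the claimed bound.
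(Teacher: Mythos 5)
Your proof is correct in substance, but it takes a genuinely different route from the paper's. You use a \emph{uniform} grid $t_k = t_2 + k\d$ in the parameter $t$ and then compensate for the vertical stretching of the graph of $t\mapsto t^\a$ by inserting a factor $\a t_k^{\a-1}$ into the ball count per strip, turning the final estimate into a Riemann sum for $\int_{t_2}^{t_1}(1+\a t^{\a-1})\,dt$. The paper instead chooses the levels $\tau_1<\dots<\tau_N$ \emph{adaptively}, so that the consecutive slices $S_i = S\cap(\g_{\tau_i}\times\{\tau_i^\a\})$ on the surface itself are separated by distances comparable to $\e$. The Hausdorff-distance hypothesis then gives $N\lesssim \diam S/\e$ directly, because the three-dimensional separation of consecutive slices already accounts for both the horizontal displacement $|\tau_{i+1}-\tau_i|$ and the vertical displacement $|\tau_{i+1}^\a - \tau_i^\a|$; summing telescopes to $c_1(t_1-t_2)+(t_1^\a - t_2^\a)\lesssim \diam S$. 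This absorbs the $t^\a$ stretch into the choice of grid and avoids both the calculus and the separate treatment of $t_2=0$. The chord-arc bound on each slice is then used identically in both arguments. Each approach is sound; the paper's is shorter, yours is more explicit about where the $t^\a$ geometry enters.

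One remark on your stated concern about bounded overlap of the vertical strips: for an \emph{upper} bound on Hausdorff measure you only need the strips to \emph{cover} $S$, which the Hausdorff-distance hypothesis together with the triangle inequality guarantees (every point of $\pi(S)\cap\g_t$, $t\in[t_k,t_{k+1}]$, lies within $(1+c_1)\d$ of some subarc of $\pi(S)\cap\g_{t_k}$). Overlap multiplicity would only matter for a lower bound. So the ``short combinatorial argument'' you anticipate needing is actually superfluous here; once you observe that the strips cover, the ball count is legitimate. Your separate handling of the $k=0$ strip when $t_2=0$ is correct and necessary for the uniform-grid approach, and its contribution $O(\d^\a)$ indeed vanishes as $\d\to 0$.
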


\begin{proof}
Fix $\e>0$. Let $t_2 = \tau_1 < \dots < \tau_N = t_1$ be such that the sets $S_i =S \cap (\g_{\tau_i}\times \{\tau_i^{\a}\})$ satisfy $\e/4\leq \dist(S_{i},S_{i+1}) \leq \e/2$. By the first assumption 
of the lemma, it is straightforward to check that $N \leq N_1\diam{S}/\e$ for some $N_1$ depending on $c_1$. The second assumption implies that each $S_i$ contains points $x_{i,1},\dots,x_{i,n_i}$ 
satisfying $|x_{i,j}-x_{i,j+1}| \leq \e/2$ and $n_i \leq N_2 \diam{S_i}/\e \leq N_2\diam{S}/\e$ for some $N_2$ depending on $c_2$. Thus, $S$ can be covered by at most $N_1N_2(\diam{S})^2/\e^2$ balls of 
radius $\e$ and the lemma follows.
\end{proof}

For the upper bound of (\ref{eq:2regsquare}) we consider two cases. The first case is an application of Lemma \ref{lem:estimation} while in the second case we use the weak chord-arc condition to 
subdivide $D$ into smaller pieces on which the first case applies. 

\begin{proof}[{Proof of Lemma \ref{lem:2regsquare}}]
As mentioned already, the lower bound of (\ref{eq:2regsquare}) follows from the discussion in \cite[Section 6.2.1]{VW2} even without the weak chord-arc assumption of $\partial\Omega$. It remains to show the upper 
bound. By Lemma \ref{lem:strongalpha2}, there exists $M_{\a}>1$ such that for any subarc $\G \subset \partial\Omega$ with $\diam{\G} < 1$ and for any $(\diam{\G})^{\frac{1}{\a}-1}$-partition $\P$ of $\G$ we have 
$M(\G,\P) \leq M_{\a}$.

Fix a square piece $D = D(x_1,y_1,x_2,y_2)$ where $x_1,y_1,x_2,y_2$ satisfy equations (i)-(iii). Let $x_0,y_0 \in \partial\Omega$ be such that
\[ |x_0-\pi(x_1)| = |y_0-\pi(y_1)| = t_1 \text{ and } |x_0-\pi(x_2)| = |y_0-\pi(y_2)| = t_2\]
and set $\G_0$ to be the subarc of $\partial\Omega$, of smaller diameter, with endpoints $x_0,y_0$. The choice of $\e_0$ implies $\diam{\G_0} \leq C_0|x_0-y_0| \leq C_0(\diam{D} + 2t_1)\leq C_0(\diam{D} + 2\e_0) < 1/2$.

\medskip

\emph{Case 1.} Suppose that $(\diam{\G_0})^{\frac{1}{\a}} \leq t_2/10$. We claim that there is $C>1$ depending on $C_0,M_{\a}$ such that for each $\e \in [t_2,t_1]$, the arc $\g_{\e}\cap \pi(D)$ is a $C$-chord-arc curve. 
Assuming the claim, the upper bound of (\ref{eq:2regsquare}) follows from Lemma \ref{lem:estimation}. To prove the claim, it suffices to show that $\g_{t_2}\cap \pi(D)$ is a $C$-chord-arc curve. 
Then, using the fact that for each $\e>t_2$, the set $\g_{\e}$ is the $(\e-t_2)$-level set of $\g_{t_2}$, the claim follows from Lemma \ref{lem:levelcathm}. Let $\s$ be a subarc of $\g_{t_2}\cap D$ and $x,y$ 
be the endpoints of $\s$. Since $\g_{t_2}$ is a quasicircle, it suffices to show that there exists $C>1$ such that $\ell(\s) \leq C\diam{\s}$. Let $x',y' \in \G_0$ be such that $|x-x'|=|y-y'| = t_2$ and 
set $\s' = \G_0(x',y')$. 

If $\diam{\s'} \leq t_2/10$ then, by Lemma \ref{lem:levelcalemma}, $\s$ is a $c_0$-chord-arc curve for some universal $c_0>1$. 

If $\diam{\s'} \in [t_2/10,10t_2]$ then apply Lemma \ref{lem:qcircles} to get a $10^{-2}$-partition $\P = \s'_1,\dots,\s'_N$ of $\s'$ with $N$ bounded above by a positive constant depending on $C_0$. 
For each $n=1,\dots,N$ let $\s_n$ be the set of all points $z$ in $\s$ that satisfy $\dist(z,\s_n') = t_2$. By Lemma \ref{lem:levelsubarc}, $\{\s_n\}$ are subarcs of $\s$, perhaps not mutually disjoint. Thus, $\s$ is a 
union of $N$ $c_0$-chord-arc curves, hence $\ell(\sigma) \leq c_1\diam{\s}$ for some $c_1>1$ depending on $C_0$.

Finally, if $\diam{\s'} > 10t_2$ note that $\diam{\s}\simeq \diam{\s'}$. Since $(\diam{\s'})^{\frac{1}{\a}} \leq (\diam{\G_0})^{\frac{1}{\a}} \leq t_2/10$, Lemma \ref{lem:strongalpha2} and the weak chord-arc property of $\s'$ 
imply that there exists a $(\frac{t_2}{10\diam{\s'}})$-partition $\P = \{\s'_1,\dots,\s'_N\}$ of $\s'$ with $N \leq N_0\diam{\s'}/t_2$ and $N_0$ depending on $C_0,M_{\a}$. Define $\s_1,\dots,\s_N$ as above and note that each 
$\s_n$ is a $c_0$-chord-arc curve and satisfies $\diam{\s_n} \leq \diam{\s_n'} + t_2 \lesssim t_2$. Hence, 
\[ \ell(\s) \leq \sum_{n=1}^{N} \ell(\s_n) \leq \sum_{n=1}^{N} c_1 \diam{\s_n} \lesssim N t_2 \lesssim \diam{\s'} \simeq \diam{\s}.\]

\emph{Case 2.} Suppose that $(\diam{\G_0})^{\frac{1}{\a}} > t_2/10$. Let $H = 10^{2/\a}$. For a subarc $\G' \subset \G_0$ define $ D(\G')$ to be the set of all points $(z,(\dist(z,\partial\Omega))^{\a})$ such that 
$z\in\overline{\Omega}$, $\dist(z,\partial\Omega) = \dist(z,\G')$ and 
\[0 \leq \dist(z,\partial\Omega) \leq H(\diam{\G'})^{1/\a}. \]
Note that $\diam{D(\G')} \leq 2(H^{\a}\diam{\G'} + H(\diam{\G'})^{1/\a} + \diam{\g_{t_1}(x_1,y_1)}) \lesssim \diam{\G'}$. From the middle inequality of (iii), it is easy to see that $t_1  < H(\diam{\G_0})^{1/\a}$ and thus 
$D \subset D(\G_0)$. Let 
\[E_0 = \{x\in D(\G_0) \colon \dist(\pi(x),\partial\Omega) \geq 10(\diam{\G_0})^{2/\a}\}\]
and note that $\diam{E_0}\leq \diam{D(\G_0)}\lesssim \diam{\G_0}$. 

We claim that $\mathcal{H}^2(E_0) \leq C (\diam{\G_0})^2$ for some $C>1$ depending on $C_0,M_{\a}$. As in \emph{Case 1}, it is enough to show that $\g = \g_t \cap \pi(E_0)$ is a chord-arc curve when 
$t = 10(\diam{\G_0})^{\frac{2}{\a}}$. The claim then follows from Lemma \ref{lem:levelcathm} and  Lemma \ref{lem:estimation}. Let $\sigma \subset \g$ and define $\sigma' \subset \G_0$ as in \emph{Case 1}. 
If $\diam{\sigma'} \leq 10t$ then the claim follows from Lemma \ref{lem:levelcalemma} and Lemma \ref{lem:strongalpha2} as in  \emph{Case 1}. If $\diam{\sigma'} \geq 10t$ then let $\{\sigma'_1,\dots,\sigma'_m\}$ be a 
$(\diam{\sigma'})^{\frac{1}{\a}-1}$-partition of 
$\sigma'$ with $m\leq 2M_{\a}$. For each $i=1,\dots,m$ let $\{\sigma'_{i1},\dots,\sigma'_{iN_i}\}$ be a $(t(\diam{\sigma'})^{-\frac{1}{\a}})$-partition of $\sigma_i'$. Define subarcs $\{\sigma_{ij}\}$ of $\sigma$ as in 
\emph{Case 1}. Since $\diam{\sigma'_{ij}}\leq t < 10t$, each $\sigma_{ij}$ is a $c_1$-chord-arc curve for some $c_1>1$ depending on $C_0$. Moreover, by the weak chord-arc property and Lemma \ref{lem:strongalpha}, 
$N_1+\dots+N_m \leq10M_1M_{\a}^2\diam{\sigma'}/t$ where $M_1$ is as in Lemma \ref{lem:Mindex}. Since $\diam{\sigma'} \simeq \diam{\sigma}$,
\[ \ell(\sigma) \leq \sum_{i,j}\ell(\sigma_{ij}) \leq  c_1 \sum_{i,j} \diam{\sigma_{ij}} \lesssim t\sum_{i=1}^m N_i \lesssim \diam{\sigma'} \lesssim \diam{\sigma}.\] 
and the claim follows.

Let $\P_1 = \{\G_1,\dots,\G_N\}$ be a $(\diam{\G_0})^{\frac{1}{\a}-1}$-partition of $\G_0$ with $N \leq 2M_{\a}(\diam{\G_0})^{1-\frac{1}{\a}}$. Define $D(\G_i)$, $E_i$ as above and note that the choice of $H$ yields 
$D(\G_0) \subset E_0 \cup \bigcup_{i=1}^N D(\G_i)$. Similarly as above, $\mathcal{H}^2(E_i) \leq C (\diam{\G_0})^{\frac{2}{\a}}$.

For each $i=1,\dots,N$ let $\{\G_{i1},\dots,\G_{iN_i}\}$ be a $\frac{(\diam{\G_0})^{1/\a^2}}{\diam{\G_i}}$-partition of $\G_i$ and set $\P_2 = \{\G_{ij}\}$. The weak chord-arc 
condition and Lemma \ref{lem:qcircles} imply that $N_i \leq 10M_{\a}M_1 (\diam{\G_i})^{1-\frac{1}{\a}} \leq 10M_{\a}M_1 2^{\frac{1}{\a}-1}(\diam{\G_0})^{\frac{1}{\a}-\frac{1}{\a^2}}$ where $M_1$ is as in Lemma \ref{lem:Mindex}. 
Thus, $|\P_2| \leq (m_0 2^{\frac{1}{\a}-1})^2 (\diam{\G_0})^{1-\frac{1}{\a^2}}$ with $m_0 =10M_{\a}M_1$. Again, the choice of $H$ yields
\[ D(\G_0) \subset E_0 \cup \bigcup_{i=1}^N E_i \cup \bigcup_{i=1}^N\bigcup_{j=1}^{N_i} D(\G_{ij}). \]
Inductively, we obtain partitions $\P_k = \{\G_{i_1\cdots i_k}\}$ with 
\[ |\P_k| \leq (m_0 2^{\frac{1}{\a}-1})^{k}(\diam{\G_0})^{1-\frac{1}{\a^{k}}}  \] 
and square-like pieces $E_{i_1\cdots i_k}$ with
\[ D(\G_0) \subset E_0 \cup \bigcup_i E_i \cup \bigcup_{i_1,i_2}E_{i_1,i_2} \cup \dots \cup \bigcup_{\G_{i_1\cdots i_k}\in \P_k}D(\G_{i_1\cdots i_k})\]
and $\mathcal{H}^2(E_{i_1\cdots i_k}) \leq C(\diam{\G_0})^{\frac{2}{\a^k}}$. Therefore, since $D \subset D(\G_0)$,
\begin{align*}
\mathcal{H}^2(D) &\leq \mathcal{H}^2(E_0) + \sum_{i}\mathcal{H}^2(E_i) + \sum_{i_1,i_2} \mathcal{H}^2 (E_{i_1,i_2}) + \dots \\
                 &\lesssim (\diam{\G_0})^2 \sum_{k=0}^{\infty} (m_0 2^{\frac{1}{\a}-1})^k (\diam{\G_0})^{\frac{1}{\a^k} - 1}\\
                 &\leq (\diam{\G_0})^2\sum_{k=0}^{\infty} \frac{(m_0 2^{\frac{1}{\a}-1})^k}{2^{\frac{1}{\a^k} - 1}}.
\end{align*}  
It is easy to see that the latter series converges. Since $\diam{\G_0} \leq \diam{D}$, we conclude that $\mathcal{H}^2(D) \lesssim (\diam{D})^2$ and the proof is complete.
\end{proof}

\subsection{The LQC property and V\"ais\"al\"a's method}\label{sec:LQC&Vais}

The connection between the LQC property of $\Omega$ and the LLC property of $\S(\Omega,t^{\a})$ is established in the following proposition from \cite{VW2}.

\begin{prop}[{\cite[Proposition 5.1, Lemma 5.6]{VW2}}]\label{prop:LLCLQC}
Suppose that $\a\in(0,1)$ and $\Omega$ is a Jordan domain whose boundary $\partial\Omega$ is a quasicircle. Then $\S(\Omega,t^{\a})$ is LLC if and only if $\Omega$ has the LQC property.
\end{prop}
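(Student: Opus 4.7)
The plan is to prove both implications separately, exploiting the foliation of $\S = \S(\Omega,t^{\a})$ by the lifted level sets $\tilde\g_\e = \{(x,\pm\e^{\a}) \colon x\in\g_\e\}$ and the equator $\partial\Omega\times\{0\}$.

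For the direction \emph{LQC $\Rightarrow$ LLC}, I verify $\lambda$-LLC$_1$ and $\lambda$-LLC$_2$ directly. By reflection symmetry in $\R^2\times\{0\}$, one may restrict to $\S^+$ and split cases by whether the ambient ball is far from or near the equator. Let $p\in\S^+$ project to a point of $\g_{\e_*}$; the uniform quasicircle constant of nearby level sets gives local product-like coordinates on $\S^+$ around $p$, namely a horizontal arc-coordinate along some $\g_\e$ and a vertical height-coordinate $\e\mapsto\e^{\a}$. To connect two points $x,y\in B^3(p,r)\cap\S$, one travels first along a level arc, whose diameter is linearly controlled by its chord length via (\ref{eq:3pts}), and then vertically between levels, where the H\"older continuity of $\e\mapsto\e^{\a}$ provides quantitative control. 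For LLC$_2$, I use that each quasicircle $\g_\e$ is itself $\lambda$-LLC, so that $\pi(x)$ and $\pi(y)$ can be connected through the side of $\g_\e$ opposite from $\pi(p)$, and one lifts such a path to $\S$.

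For the reverse direction \emph{LLC $\Rightarrow$ LQC}, I first observe that $\lambda$-LLC$_1$ forces local connectedness of $\S$, so each nearby level set $\g_\e$ is a locally connected continuum; together with $\partial\Omega$ being a quasicircle, one obtains that $\g_\e$ is a Jordan curve for $\e\in[0,\e_0]$ and sufficiently small $\e_0$. It remains to establish a uniform 2-point condition on $\g_\e$. Fix $x,y\in\g_\e$ and choose $p$ on the larger subarc between them with $|x-p|,|y-p|\gtrsim \diam(\g_\e(x,y))$; such $p$ exists by an intermediate-value argument along the arc. Lift to $\tilde x,\tilde y,\tilde p\in\S^+$ at common height $\e^{\a}$, so that ambient distances between lifts equal horizontal distances in $\R^2$. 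Applying $\lambda$-LLC$_2$ of $\S$ at $\tilde p$ with radius $r$ just under $\min(|\tilde p-\tilde x|,|\tilde p-\tilde y|)$ yields a continuum $E\subset \S\setminus B^3(\tilde p,r/\lambda)$ containing both $\tilde x$ and $\tilde y$. A Jordan-type separation argument then pins down $E$: a short vertical arc through $\tilde p$ combined with a tiny $\tilde\g_\e$-arc through $p$ and its mirror in $\S^-$ bounds a Jordan curve in $\S\cong\mathbb{S}^2$ whose removal separates $\tilde x$ from $\tilde y$; therefore $E$ must pass through the smaller $\tilde\g_\e$-arc, and projecting by $\pi$ yields $\diam(\g_\e(x,y))\lesssim|x-y|$ with constants depending only on $\lambda$ and $\a$.

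The main obstacle will be the topological separation argument in the reverse direction. Since $\S$ is only a topological $\mathbb{S}^2$ with possibly wild metric near the equator, one must carefully construct a separating Jordan curve and apply the Jordan curve theorem within $\S$, while simultaneously tracking how the LLC$_2$ ball around $\tilde p$ intersects both nearby level sets and the equator. A secondary difficulty is handling the degeneracy near $\e=0$: the height function $\e\mapsto\e^{\a}$ has infinite slope there, so ambient balls that meet the equator distort the level-set geometry. Both issues should yield to case analysis, with $\e$ bounded below handled by compactness and level-set regularity, and $\e$ near zero addressed by invoking the quasicircle property of $\partial\Omega$ together with the H\"older regularity of the height function.
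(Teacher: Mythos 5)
The paper does not prove this proposition: it is cited verbatim from \cite[Proposition 5.1, Lemma 5.6]{VW2}, so there is no in-paper argument to compare against. What can be assessed is the internal soundness of your sketch, and there are two substantive gaps.

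First, in the direction LLC $\Rightarrow$ LQC you write that LLC$_1$ forces local connectedness of $\S$, hence each nearby level set $\g_\e$ is a locally connected continuum and, with the quasicircle hypothesis, a Jordan curve. This does not follow. $\S(\Omega,t^{\a})$ is a topological $2$-sphere and thus locally connected regardless of any LLC hypothesis, but local connectedness of $\S$ says nothing about a fixed slice $\g_\e$; as the paper itself notes (citing \cite[Figure~1]{VW}), level sets of the distance function in a Jordan domain can be disconnected or fail to be curves even when $\partial\Omega$ is nice. Showing that the metric LLC condition rules this out is the content of \cite[Lemma~5.6]{VW2} and requires a genuine argument — e.g., LLC$_1$ applied to carefully chosen pairs straddling a would-be pinch point of $\g_\e$ — not just an appeal to local connectedness of the ambient sphere. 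Second, your separating-Jordan-curve step has a scale mismatch. The ``short vertical arc through $\tilde p$'' must travel between $\S^+$ and $\S^-$, so its ambient diameter is at least of order $\e^{\a}$, whereas the LLC$_2$ ball you excise has radius comparable to $\diam\g_\e(x,y)$. Whenever $\e^{\a}\gg\diam\g_\e(x,y)$ — perfectly possible for $x,y$ close together on a deep level set — the proposed separating curve is not contained in the excised ball, and the contradiction never materializes. Your closing paragraph acknowledges a degeneracy near $\e=0$, but the problematic regime is actually $\e$ bounded away from $0$ with $|x-y|$ small, and ``compactness and level-set regularity'' does not resolve it: the whole point of the LQC hypothesis is that level-set regularity is not automatic. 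The forward direction (LQC $\Rightarrow$ LLC) is plausible in outline but the LLC$_2$ verification — connecting through ``the side of $\g_\e$ opposite from $\pi(p)$'' — needs care when the excluded ball spans many levels or meets the equator; as stated it is too imprecise to count as a proof.
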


It turns out that the quasicircle assumption of $\partial\Omega$ can be dropped in Proposition \ref{prop:LLCLQC}.

\begin{lem}\label{lem:qcirclenec}
Suppose that $\Omega$ is a Jordan domain and $\a\in(0,1)$. Then, $\S(\Omega,t^{\a})$ is LLC if and only if $\Omega$ has the LQC property.
\end{lem}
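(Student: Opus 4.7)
The plan is to reduce both implications to Proposition \ref{prop:LLCLQC}, which handles the case in which $\partial\Omega$ is already a quasicircle. Since the LQC property (with $\e=0$) in particular requires $\g_0=\partial\Omega$ to be a $K$-quasicircle, the direction ``LQC $\Rightarrow$ LLC'' is immediate from Proposition \ref{prop:LLCLQC}. For the converse, assuming $\S(\Omega,t^\a)$ is $\l$-LLC, the whole content of the lemma reduces to showing that $\partial\Omega$ satisfies the $2$-point condition (\ref{eq:3pts}); once $\partial\Omega$ is known to be a quasicircle, Proposition \ref{prop:LLCLQC} applied to $\Omega$ delivers the LQC property.

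The geometric input is that, since $\a<1$, any continuum $E\subset\S$ of Euclidean diameter $\rho\le 1$ has vertical extent at most $\rho$, so its planar projection $\pi(E)\subset\overline{\Omega}$ lies inside the tube $\{p\in\overline{\Omega}\colon\dist(p,\partial\Omega)\le \rho^{1/\a}\}$ of width $\rho^{1/\a}\ll \rho$. Given $x,y\in\partial\Omega$ with $r=|x-y|$ small, LLC$_1$ on $\S$ applied to $(x,0),(y,0)$ yields a continuum $E\subset B^3((x,0),\l r)\cap\S$ joining them; hence $K=\pi(E)$ is a connected subset of $\overline{\Omega}$ of diameter at most $2\l r$ joining $x$ and $y$, contained in the tube of width $(2\l r)^{1/\a}$ around $\partial\Omega$. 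For small $r$ this tube is a topological annulus in $\overline{\Omega}$ with inner boundary $\partial\Omega$ and outer boundary $\g_{(2\l r)^{1/\a}}$; inside this annulus $K$ is homotopic rel endpoints to one of the two subarcs of $\partial\Omega$ from $x$ to $y$, and the subarc $K$ trails has diameter at most $\diam K+2(2\l r)^{1/\a}\lesssim r$.

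To rule out that $K$ trails the larger of the two subarcs, suppose towards contradiction that a point $z$ on the smaller-diameter subarc $\partial\Omega(x,y)$ satisfies $|x-z|,|y-z|\ge \mu r$ for an arbitrarily large constant $\mu$. Applying LLC$_2$ on $\S$ at $(z,0)$ with radius $\mu r/2$ produces a second continuum $F\subset\S\setminus B^3((z,0),\mu r/(2\l))$ through $(x,0)$ and $(y,0)$. Passing from $E,F$ to arcs in $\S$ (possible since $\S$ is path-connected) and taking their union, one obtains a Jordan curve $J\subset\S$ separating $\S$ into two topological disks, with $(z,0)$ in one of them at Euclidean distance comparable to $\mu r$ from $J$. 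Comparing $J$ with the equator $\partial\Omega\times\{0\}$, itself a Jordan curve through $(x,0),(y,0),(z,0)$, and exploiting that $(z,0)$ lies on the smaller-diameter subarc while $E$ is confined to a small ball around $(x,0)$, forces a contradiction, so $K$ must trail the smaller subarc and the $2$-point condition follows.

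The main obstacle is the structural step that the tube around $\partial\Omega$ is topologically an annulus (equivalently, that $\g_\e$ is a Jordan curve) for small $\e$, which is not automatic for an arbitrary Jordan boundary $\partial\Omega$. I would circumvent this by working inside the closed topological disk $\overline{\Omega}$ directly: after extracting an arc inside $E$ and projecting, $K$ becomes a simple path from $x$ to $y$ lying in a thin neighborhood of $\partial\Omega$, and together with either of the two subarcs of $\partial\Omega$ it encloses a Jordan subregion of $\overline{\Omega}$ to which the planar Jordan curve theorem applies, yielding the diameter bound on the subarc that $K$ trails without ever having to establish global annular structure. Once this claim is in place and $\partial\Omega$ is known to be a quasicircle, Proposition \ref{prop:LLCLQC} completes the proof.
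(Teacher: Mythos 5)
Your reduction of the lemma to Proposition~\ref{prop:LLCLQC} matches the paper exactly: LQC$\Rightarrow$LLC is immediate because LQC forces $\g_0=\partial\Omega$ to be a quasicircle, and for LLC$\Rightarrow$LQC it suffices to show $\partial\Omega$ satisfies the $2$-point condition~(\ref{eq:3pts}). The gap is in your argument for that $2$-point condition. Your first attempt relies on $\g_\e$ being a Jordan curve (the tube being an annulus), which you correctly note cannot be assumed; but the workaround you then supply does not close either. You pick a single far point $z$ on one subarc, apply LLC$_2$ centered at $(z,0)$ to the pair $(x,0),(y,0)$, and try to extract a contradiction from a Jordan curve $J$ built from arcs in $E$ and $F$. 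Two problems: the union of an arc in $E$ with an arc in $F$ need not be a Jordan curve, since the two arcs can meet at interior points; and, more seriously, no contradiction actually follows. Both $\pi(E)$ and $\pi(F)$ are continua in $\overline{\Omega}$ joining $x$ to $y$ while avoiding a neighborhood of $z$ --- a perfectly consistent configuration --- and the step ``comparing $J$ with the equator forces a contradiction'' is never made precise.

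The missing ingredient is the second far point. The paper takes $z\in\g$ and $z'\in\g'$, one on \emph{each} component of $\partial\Omega\setminus\{x,y\}$, both outside $B^3(x,2\lambda^2|x-y|)$, and applies LLC$_2$ centered at $x$ to obtain a continuum $E'\subset \S\setminus B^3(x,2\lambda|x-y|)$ joining $(z,0)$ to $(z',0)$. The key topological fact your one-point version cannot access is that in the closed disk $\overline{\Omega}$, a continuum $\pi(E)$ joining $x,y$ and a continuum $\pi(E')$ joining $z,z'$ must intersect, because $x,z,y,z'$ occur in cyclic order on $\partial\Omega$. Since $x\in\R^2\times\{0\}$, the ball $B^3(x,2\lambda|x-y|)$ is symmetric across $\R^2\times\{0\}$, so a common projection point forces $E'$ into that ball, contradicting the choice of $E'$. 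Hence no such pair $z,z'$ exists, one subarc lies in $\overline{B}^3(x,2\lambda^2|x-y|)$, and~(\ref{eq:3pts}) holds with $C=4\lambda^2$. Replace your LLC$_2$ step with this two-sided separation argument; no tube or annulus structure, and in fact no use of $\a<1$, is needed for this part.
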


\begin{proof}
Suppose that $\S = \S(\Omega,t^{\a})$ is $\lambda$-LLC for some $\lambda>1$. In view of Proposition \ref{prop:LLCLQC} it suffices to show that $\partial\Omega$ is a quasicircle. In particular, we show
 that $\partial\Omega$ satisfies the $2$-point condition (\ref{eq:3pts}) with $C=4\lambda^2$. 

Let $x,y\in\partial\Omega$ and $\g,\g'$ be the two components of $\partial\Omega\setminus \{x,y\}$. By the $\lambda-\text{LLC}_1$ property, the points $x,y$ are contained in a continuum $E$ in 
$B^3(x,2\lambda|x-y|)\cap\S$. Then the projection $\pi(E)$ is a continuum in $\overline{\Omega}$ containing $x,y$. Suppose that there exist points $z,z'$ in $\g,\g'$ respectively, which lie outside of 
$B^3(x,2\lambda^2|x-y|)$. The $\lambda-\text{LLC}_2$ property implies that there exists a continuum $E' \subset \S\setminus B^3(x,2\lambda|x-y|)$ that contains $z,z'$. But then, the projection $\pi(E')$
 is a continuum in $\overline{\Omega}$ containing $z,z'$. It follows that $\pi(E)\cap\pi(E') \neq \emptyset$ and, since $\S$ is symmetric with respect to $\R^2\times\{0\}$, $E'$ intersects 
$B^3(x,2\lambda|x-y|)$ which is a contradiction. Therefore, at least one of $\g,\g'$ lies in $\overline{B}^3(x,2\lambda^2|x-y|)$ and $\min\{\diam{\g},\diam{\g'}\} \leq 4\lambda^2|x-y|$.
\end{proof}

We now show that the weak chord-arc condition of $\partial\Omega$ is necessary for $\S(\Omega,t^{\a})$ to be quasisymmetric to $\mathbb{S}^2$. This concludes the proof of Theorem \ref{thm:main}. The proof follows closely 
that of \cite[Proposition 6.2]{VW2}. The main idea used is due to V\"ais\"al\"a from \cite{Vais3}.

\begin{prop}\label{prop:N(w,h)}
Suppose that $\Omega$ has the LQC property and $\a\in(0,1)$. If $\S(\Omega,t^{\a})$ is quasisymmetric to $\mathbb{S}^2$ then $\partial\Omega$ is a weak chord-arc curve.
\end{prop}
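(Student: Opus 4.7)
The plan is to adapt V\"ais\"al\"a's counting argument from \cite{Vais3}, as deployed in \cite[Proposition 6.2]{VW2}. Since $\S(\Omega,t^{\a})$ is in particular LLC, the proof of Lemma \ref{lem:qcirclenec} already forces $\partial\Omega$ to be a quasicircle. Hence Remark \ref{rem:strongalpha} reduces the task to establishing the following: there exists $M_\a>0$ such that for every subarc $\G_0\subset\partial\Omega$ with $\diam\G_0=w<1$ and every $(\diam\G_0)^{1/\a-1}$-partition $\P=\{\G_1,\dots,\G_N\}$ of $\G_0$ one has $M(\G_0,\P)\leq M_\a$, or equivalently $N\leq M_\a w^{1-1/\a}$, since $\diam\G_i\simeq w^{1/\a}$.

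First, I would construct a disjoint family of ``witness pieces'' on the surface. For each $\G_i$ in the partition, the LQC hypothesis together with Lemma \ref{lem:square} supplies a square piece $D_i\subset \S(\Omega,t^{\a})^+$ above $\G_i$; by choosing bounding levels $0\leq t_i'<t_i$ so that $t_i-t_i'$ and $t_i^{\a}-(t_i')^{\a}$ are both of order $w^{1/\a}$, the piece $D_i$ will satisfy $\diam D_i\simeq w^{1/\a}$. Lemma \ref{lem:levelsubarc} makes the projections $\pi(D_i)$ mutually disjoint (each is associated to the disjoint subarc $\G_i$), and therefore the $D_i$'s form a disjoint family; their union $\S_0:=\bigcup_i D_i$ has $\diam \S_0\simeq w$.

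Next, I would transfer the counting to $\mathbb{S}^2$ via the $\eta$-quasisymmetric homeomorphism $f\colon \S(\Omega,t^{\a})\to\mathbb{S}^2$. Because the $D_i$'s have mutually comparable diameters and all lie inside a ball of $\S$-diameter $\simeq w$, the QS property of $f$ forces the image diameters $\rho_i:=\diam f(D_i)$ to be mutually comparable to a common value $\rho$, with each $f(D_i)$ containing a ball in $\mathbb{S}^2$ of radius $\simeq \rho$. The $2$-regularity of $\mathbb{S}^2$ then gives $\mathcal{H}^2(f(D_i))\gtrsim\rho^2$, and disjointness of $\{f(D_i)\}$ together with $\bigcup_i f(D_i)\subset f(\S_0)$ produces
\[
N\rho^2 \;\lesssim\; \sum_i \mathcal{H}^2(f(D_i)) \;\leq\; \mathcal{H}^2(f(\S_0)) \;\lesssim\; \rho_*^2,
\]
where $\rho_*:=\diam f(\S_0)$. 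Rearranging yields $N\lesssim(\rho_*/\rho)^2$.

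The main obstacle will be closing the gap between $(\rho_*/\rho)^2$ and the target $w^{1-1/\a}$. A generic application of $\eta$-QS to nested balls of radii $\simeq w$ and $\simeq w^{1/\a}$ only yields $\rho_*/\rho\leq \eta(w^{1-1/\a})$, which is too weak. To remedy this, I would import the V\"ais\"al\"a trick from \cite[Proposition 6.2]{VW2}: exploit the cusp at $\partial\Omega\times\{0\}$ where the two sheets of $\S(\Omega,t^{\a})$ meet to exhibit, inside each $D_i$, a pair of points whose intrinsic geometry forces their $f$-images to be separated in $\mathbb{S}^2$ by at least a definite multiple of $\rho_*\,w^{(1/\a-1)/2}$. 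The argument used there for the chord-arc case adapts almost verbatim to the present scale $w^{1/\a}$, the only difference being that one partitions $\G_0$ with pieces of diameter $w^{1/\a}$ rather than with bounded-length pieces. With this sharpened lower bound on $\rho$, the desired inequality $N\lesssim w^{1-1/\a}$ follows, and Remark \ref{rem:strongalpha} concludes that $\partial\Omega$ has the weak chord-arc property.
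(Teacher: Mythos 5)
Your high-level outline matches the paper's strategy: reduce via Remark~\ref{rem:strongalpha} to a counting bound on $(\diam\G_0)^{1/\a-1}$-partitions, lift each $\G_i$ to a square-like piece on $\S(\Omega,t^{\a})^+$, and then compare $\mathcal{H}^2$-measure under the quasisymmetry. You also correctly diagnose that the naive packing estimate $N\lesssim(\rho_*/\rho)^2$ is useless because a bare application of quasisymmetry only gives $\rho_*/\rho\leq\eta(w^{1-1/\a})$, which for a general control function $\eta$ is far weaker than the target $w^{(1-1/\a)/2}$.

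However, the way you propose to close that gap in the last step does not work, and is not what V\"ais\"al\"a's device (as used in \cite[Prop.~5.1]{VW2} and in this paper) actually does. First, the claim in your third paragraph that the image diameters $\rho_i=\diam f(D_i)$ are ``mutually comparable to a common value $\rho$'' is false: the $D_i$'s have pairwise distances up to $\simeq w$ while each has diameter $\simeq w^{1/\a}$, so quasisymmetry only compares $\rho_i$ and $\rho_j$ up to a factor $\eta(w^{1-1/\a})$, which is unbounded as $w\to0$. Second, and more fundamentally, no argument can ``exhibit, inside each $D_i$, a pair of points whose $f$-images are separated by $\gtrsim \rho_* w^{(1/\a-1)/2}$''; such a \emph{pointwise} lower bound on each individual $\rho_i$ is exactly the kind of information quasisymmetry refuses to give, and it is precisely what V\"ais\"al\"a's Cauchy--Schwarz trick is designed to \emph{avoid} having to prove. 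The actual mechanism is a second level of decomposition: each tall, narrow strip $D_i$ (width $\simeq d\simeq w^{1/\a}$, height $\simeq d^{\a}\simeq w$) is sliced by horizontal planes into $k\simeq d^{\a-1}\simeq w^{1-1/\a}$ square-like pieces $D_{ij}$ of diameter $\simeq d$. A vertical segment in $D_i$ chains through all $D_{ij}$, so $\sum_j\diam f(D_{ij})\geq h_i$ with $h_i\gtrsim\diam f(D)$ by quasisymmetry (because the bottom-to-top separation of $D_i$ is $\simeq\diam D$). Cauchy--Schwarz turns this \emph{averaged} lower bound into $h_i^2\leq k\sum_j(\diam f(D_{ij}))^2$, and summing over $i$ and using $\sum_{i,j}(\diam f(D_{ij}))^2\lesssim\mathcal{H}^2(f(D))\lesssim(\diam f(D))^2$ yields $N\lesssim k\simeq w^{1-1/\a}$ with constants depending only on $\eta$ and the LQC constant. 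It is the ratio $N/k$, not any single $\rho_i$, that is controlled. (The paper also runs the argument by contradiction, extracting from Remark~\ref{rem:strongalpha} a sequence of arcs with $M(\G_n,\P_n)>4Cn$ and letting $n\to\infty$; that is a cosmetic difference from your direct formulation.) Without the $D_{ij}$-layer and the two applications of Cauchy--Schwarz, your proof has a real hole at the crucial step.
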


\begin{proof}
By our assumptions, there exist $\e_0>0$ and $C>1$ such that, for all $\e \in [0,\e_0]$, the set $\g_{\e}$ satisfies (\ref{eq:3pts}) with constant $C$. Set $\partial\Omega = \G$.

Suppose that the claim is false. Then, by Remark \ref{rem:strongalpha}, for each $n\in\N$, there exists a subarc $\G_n \subset \G$, of diameter less than $1$, and a $(\diam{\G_n})^{\frac{1}{\a}-1}$-partition 
$\P_n = \{\G_{n,1}, \dots,\G_{n,N_n}\}$ with $M(\G_n,\P_n) > 4Cn$. By Lemma \ref{lem:Mindex}, the latter implies that
\[ N_n > 4Cn (\diam{\G_n})^{1-\frac{1}{\a}}.\]

Let $\{x_{n,0}, x_{n,1}, \dots, x_{n,N_n}\}$ be the endpoints of the arcs $\G_{n,1}, \dots,\G_{n,N_n}$, ordered consecutively according to the orientation in $\G_n$ with $x_{n,0}, x_{n,N_n}$ being the endpoints of $\G_n$.
By adding more points from each subarc $\G_{n,i}$, in this collection, we may further assume that
\[ \frac{1}{4C}(\diam{\G_n})^{\frac{1}{\a}} \leq |x_{n,i} - x_{n,i-1}| \leq \frac{1}{2C}(\diam{\G_n})^{\frac{1}{\a}}.\]
It follows that
\[ \sum_{i=1}^{N_n} |x_{n,i-1}-x_{n,i}| \geq n \diam{\G_n}.\]
Set $d_n = (10C^2)^{-1}\min_{1\leq i \leq N_n}|x_{n,i}-x_{n,i-1}|$ and note that $d_n^{\a} \simeq \diam{\G_n}$. 

The rest of the proof is identical to that of \cite[Proposition 5.1]{VW2} by setting $\varphi(t) = t^{\a}$ therein. We sketch the remaining steps for the sake of completeness.

Fix $n\in\N$ and write $N_n =N$ and $x_{n,i} = x_i$. Assume that there exists an $\eta$-quasisymmetric map $F$ from $\S(\Omega,t^{\a})$ onto $\mathbb{S}^2$. Composing $F$ with a suitable M\"obius map
 we may assume that $F(\S(\Omega,t^{\a})^+)$ is contained in the unit disc $\B^2$. Since $\partial\Omega$ is a quasicircle, we can find points $w_0,\dots,w_N$ on $\G$ and points $w'_0,\dots,w'_N$ on 
$\g_d =\{x\in\Omega\colon \dist(x,\partial\Omega) = d\}$ which follow the orientation of $\{x_{0},\dots,x_{N}\}$ such that $|w_i-x_i| \in [d,3C_0d]$ and $|w_i-w_i'| = d$. Hence, $|w_i-w_{i+1}| \simeq d$. Let $D$ be the 
square-like piece on $\S(\Omega,t^{\a})^+$ whose projection on $\R^2\times\{0\}$ is the Jordan domain bounded by $\G(w_{0},w_{N})$, $\g_d(w_{0}',w_{N}')$, $[w_{0},w_{0}']$, $[w_{N}),w_{N}']$. 

The partition of $\G(w_0,w_N)$ into the subarcs $\G(w_{i-1},w_i)$, $i=1,\dots,N$ induces a partition of $D$ into $N$ tall and narrow strips $D_i$ with height in the magnitude $d^{\a}$ and width in the magnitude d. Each $D_i$ 
is further partitioned by planes parallel to $\R^2\times\{0\}$ into $k$ square-like pieces $D_{ij}$ with $k \simeq d^{\a-1}$. The quasisymmetry of $F$ implies that $(\diam{F(D_{ij})})^2 \leq c_1\mathcal{H}^2(D_{ij})$ with 
$c_1>1$ depending on $\eta,C$. Summing first over $j$ and then over $i$, and applying H\"older's inequality twice, we obtain $(\diam{F(D)})^2\leq Nkc_1\mathcal{H}^2(D) \leq c_2n\mathcal{H}^2(D)$ with $c_2>1$ depending on 
$\eta,C$. On the other hand, the quasisymmetry of $F$ on $D$ implies that $\mathcal{H}^2(F(D)) \leq c_3(\diam{F(D)})^2$ with $c_3>1$ depending on $\eta,C$. Since $\diam{F(D)}\neq 0$, letting $n\to\infty$, we obtain a contradiction.
\end{proof}

\subsection{Assouad dimension}\label{sec:assouad}

The Assouad dimension of a metric space $(X,d)$, introduced in \cite{Assouad}, is the infimum of all $s>0$ that satisfy the following property: there exists $C>1$ such that for any $Y\subset X$ and 
$\d\in(0,1)$, the set $Y$ can be covered by at most $C\d^{-s}$ subsets of diameter at most $\d\diam{Y}$. In a sense, the main difference between Hausdorff and Assouad dimension of a space $X$ is that that the 
former is related to the average small scale structure of $X$, while the latter measures the size of $X$ in all scales. See \cite{Luukkainen} for a detailed survey of the concept.

\begin{rem}
The 2-point condition (\ref{eq:3pts}) implies that if $X$ is a quasicircle, then all sets in the definition of the Assouad dimension can be replaced by subarcs of $X$.
\end{rem}

The claim of the remark becomes evident after noticing that for all subsets $Y$ of a $K$-quasicircle $\G$ there exists a subarc $\G' \subset \G$ containing $Y$, such that $\diam{\G'} \leq C^{-1}\diam{Y}$ for some $C>1$ 
depending on $K$.

\medskip

We now turn to the proof of Theorem \ref{thm:assouad}.

\begin{proof}[{Proof of Theorem \ref{thm:assouad}}] Suppose that $\G$ is a $K$-quasicircle with Assouad dimension greater than $1$; in particular, greater than $1+\e$ for some fixed $\e\in(0,1)$. We claim that $\G$ does not have 
the weak chord-arc property.

Contrary to the claim, assume that $\G$ satisfies the weak chord-arc condition for some $M_0>1$. By Lemma \ref{lem:strongalpha2} there exists $N_0>1$ depending on $M_0,K$ such that for all $\G' \subset\G$ with $\diam{\G'}<1$ 
and all $\diam{\G'}$-partitions $\P$ of $\G'$ we have $|\P| < N_0$. By our assumption on the Assouad dimension of $\G$, there exists a subarc $\G' \subset \G$ and a number $\d\in (0,1)$ such that all $\d$-partitions $\P$ of 
$\G'$ satisfy $|\P| \geq M\d^{-1-\e}$ where $M = 10N_0M_1M_{\a}$, $M_1>1$ is as in the third claim of Lemma \ref{lem:Mindex}, $M_{\a}>1$ is the number in Lemma \ref{lem:strongalpha} associated to 
$\a = \frac{1-\sqrt{\beta}}{1+\sqrt{\beta}}$ and $\beta = \frac{1+\e/2}{1+\e}$. The subarc $\G'$ can be chosen small enough so that $\diam{\G'} < \min\{(2M_0M_1)^{-2/\e},e^{-1/(1-\sqrt{\beta})}\}$.

\emph{Case 1.} Suppose that $\diam{\G'} \leq 2\d^{(1-\sqrt{\beta})/\sqrt{\beta}}$. Assume first that $\d > \diam{\G'}$. Let $\P$ be a $\d$-partition of $\G'$. The weak chord-arc property of $\G'$ and Lemma 
\ref{lem:strongalpha2} yield $|\P| \leq N_0\d^{-1} < M \d^{-1-\e}$ which is false.

Assume now that $\d \leq \diam{\G'}$. Let $\a' \in (0,1)$ be such that $(\diam{\G'})^{\frac{1}{\a'}-1} = \d$. The assumption on $\diam{\G'},\d$ and the fact that $\diam{\G'} < 1/2$ yield 
$\a' \geq \frac{1-\sqrt{\beta}}{1+\sqrt{\beta}} = \a$. By Lemma \ref{lem:strongalpha2} and Lemma \ref{lem:strongalpha}, every $\d$-partition $\P$ of $\G'$ satisfies $|\P| \leq 10M_{\a}M_1 \d^{-1} < M \d^{-1-\e}$ which is also 
false.

\emph{Case 2.} Suppose that $\diam{\G'} > 2\d^{(1-\sqrt{\beta})/\sqrt{\beta}}$. By our assumptions on $\d$ and $\diam{\G'}$ we have
\begin{align*}
\frac{\log{(\diam{\G'})}+1}{\log{(\frac{\d}{2}\diam{\G'})}} &= \frac{\log{(\diam{\G'})}+1}{\log{(\diam{\G'})}}\frac{\log{\d}}{\log{(\frac{\d}{2}\diam{\G'})}}\frac{\log{(\diam{\G'})}}{\log{\d}}\\
&\geq \beta \frac{\log{(\diam{\G'})}}{\log{\d}}.
\end{align*}
Apply Lemma \ref{lem:tree} for $\G'$ with $\d' = \diam{\G}$ and $N = \d^{-1-\e}$. There exists a subarc 
$\G'' \subset \G'$ and a $\diam{\G'}$-partition $\P'$ such that
\[ |\P'| \geq \d^{(-1-\e)\beta \frac{\log{(\diam{\G'})}}{\log{\d}}}  \geq (\diam{\G'})^{-(1+\e/2)}.\]

We create a $\diam{\G''}$-partition of $\G''$ as follows. For each $\sigma\in \P'$ let $\P(\sigma)$ be a $\frac{(\diam{\G''})^2}{\diam{\sigma}}$-partition of $\sigma$; for those $\sigma \in \P'$ that satisfy 
$\diam{\sigma} < (\diam{\G''})^2$ set 
$\P(\sigma) =\{\sigma\}$. Define $\P''$ to be the union of all partitions $\P(\sigma)$. It is easy to see that $\P''$ is a $\diam{\G''}$-partition of $\G''$ and Lemma \ref{lem:Mindex} gives
\[ M(\G'',\P'') \geq M(\G'',\P') \geq \frac{1}{2} (\diam{\G'})^{-\e/2} > M_0M_1. \]
The latter, however, is false by Lemma \ref{lem:Mindex} and the proof is complete.
\end{proof}

\section{Examples from homogeneous snowflakes}\label{sec:snowflakes}

Let $N \geq 4$ be a natural number and $p \in (1/4,1/2)$. A homogeneous $(N,p)$-snowflake is constructed as follows. Let $S_0$ be a regular $N$-gon, of diameter equal to $1/2$. At the $n$-th step, 
the polygon $S_{n+1}$ is constructed by replacing \emph{all} of the $N4^n$ edges of $S_{n}$ by the \emph{same} rescaled and rotated copy of one of the two polygonal arcs of Figure \ref{fig:figure1}, in such a way that the 
polygonal regions are expanding. The curve $\mathcal S$ is obtained by taking the limit of $S_{n}$, just as in the construction of the usual von Koch snowflake. It is easy to verify that every homogeneous snowflake
satisfies (\ref{eq:3pts}) for some $C$ depending on $N,p$, and as a result is a quasicircle.

\begin{figure}[ht]
\includegraphics[scale=1.7]{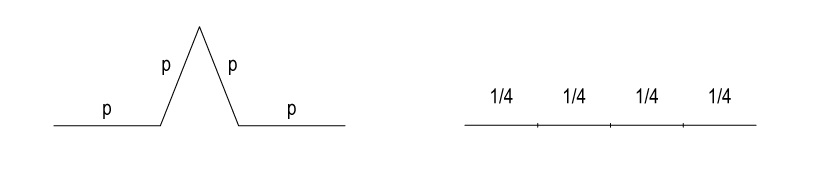}
\caption{}
\label{fig:figure1}
\end{figure}

Let $E$ be an edge of some $S_n$ towards the construction of $\mathcal{S}$. Denote with $\mathcal{S}_E$ the subarc of $\mathcal{S}$, of smaller diameter, having the same endpoints as $E$. The next lemma will ease some 
of the computations in the rest.

\begin{lem}\label{lem:edges}
A homogeneous $(N,p)$-snowflake $\mathcal S$ is a weak chord-arc curve if and only if there exists $M^*>1$ such that every subarc $\mathcal{S}_E$ has a $\diam{\mathcal{S}_E}$-partition $\P$ with 
$M(\mathcal{S}_E,\P) \leq M_0$. 
\end{lem}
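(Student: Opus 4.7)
The forward direction is immediate: every $\mathcal{S}_E$ is itself a subarc of $\mathcal{S}$ of diameter less than $1$, so the weak chord-arc property trivially furnishes a $\diam \mathcal{S}_E$-partition with $M$-value bounded by $M_0$.

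For the converse, I would exploit the self-similar structure of the homogeneous snowflake. Let $\ell_n = p^n \ell_0$ denote the common length of the stage-$n$ edges. Given an arbitrary subarc $\mathcal{S}' \subset \mathcal{S}$ with $d := \diam \mathcal{S}' < 1$, I choose $n$ so that $\ell_n \leq d < \ell_{n-1}$. The stage-$n$ vertices partition $\mathcal{S}$ into the arcs $\{\mathcal{S}_E : E \in S_n\}$, and by the $2$-point condition for the quasicircle $\mathcal{S}$ each of these satisfies $\ell_n \leq \diam \mathcal{S}_E \leq C_0 \ell_n$, so $\diam \mathcal{S}_E \simeq d$ with constants depending only on $N$ and $p$. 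Let $\mathcal{E}$ be the set of stage-$n$ edges $E$ with $\mathcal{S}_E \cap \mathcal{S}' \neq \emptyset$; these arcs meet only at their shared endpoints and their union $\tilde{\mathcal{S}}$ is a subarc containing $\mathcal{S}'$. Since consecutive stage-$n$ vertices are separated by $\gtrsim d$ yet all vertices in $\mathcal{E}$ lie in a neighborhood of $\mathcal{S}'$ of radius $\lesssim d$, the doubling property of the plane (equivalently, Lemma \ref{lem:qcircles}) forces $|\mathcal{E}| \leq C_1$ for some $C_1 = C_1(N,p)$.

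For each $E \in \mathcal{E}$, the hypothesis provides a $\diam \mathcal{S}_E$-partition $\P_E$ with $M(\mathcal{S}_E, \P_E) \leq M^*$, and Lemma \ref{lem:Mindex}(1) gives $|\P_E| \leq 2M^* / \diam \mathcal{S}_E \lesssim 1/d$. Hence $\tilde{\P} := \bigcup_{E \in \mathcal{E}} \P_E$ is a partition of $\tilde{\mathcal{S}}$ into at most $\lesssim 1/d$ arcs, each of diameter $\simeq d^2$. I then restrict $\tilde{\P}$ to $\mathcal{S}'$ by discarding pieces disjoint from $\mathcal{S}'$ and merging the at most two truncated boundary pieces into their neighbors; routine refinement (subdividing pieces that exceed diameter $d^2$ and merging those below $d^2/2$, using Lemma \ref{lem:qcircles} on the quasicircle $\mathcal{S}$ to control the counts) yields a genuine $d$-partition $\P$ of $\mathcal{S}'$ with $|\P| \lesssim 1/d$, so $M(\mathcal{S}', \P) \lesssim 1$, establishing the weak chord-arc property of $\mathcal{S}$.

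The only substantive step is the covering bound $|\mathcal{E}| \leq C_1$, which encapsulates the fact that the homogeneous snowflake has uniformly bounded "branching" at each scale; everything else is bookkeeping with Lemma \ref{lem:Mindex} and the scale-matching $\diam \mathcal{S}_E \simeq d$.
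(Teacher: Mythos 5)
Your proof is correct, but it takes a genuinely different route from the paper. The paper works \emph{top-down}: it locates the largest stage $n_0$ for which the given subarc $\G$ is contained in a single $\mathcal{S}_E$, then splits into cases according to whether $\G$ contains one of the four children of $\mathcal{S}_E$; when it does not, $\G$ straddles an interior vertex of $E$, and the paper reduces to two pieces $\G\cap\mathcal{S}_{E_{i4^q}}$ and $\G\cap\mathcal{S}_{E_{(i+1)1^r}}$ for suitable maximal $q,r$ and applies the first case to each. You work \emph{bottom-up}: you pick the stage $n$ whose edge length matches $\diam\G$, show that only boundedly many stage-$n$ arcs $\mathcal{S}_E$ can meet $\G$ (using the $2$-point condition and Lemma \ref{lem:qcircles}), and combine their hypothesized partitions. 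Your scale-matching argument neatly sidesteps the paper's case distinction for the straddling configuration and is arguably the more robust formulation, since it uses nothing beyond the fact that at each level of the construction tree all pieces have the same size; the paper's version, by contrast, makes direct use of the four-fold branching. Both proofs rely on the same two ingredients, Lemma \ref{lem:Mindex} and Lemma \ref{lem:qcircles}, and the "routine refinement" you invoke to turn $\tilde\P$ into a genuine $\diam\mathcal{S}'$-partition is exactly what Lemma \ref{lem:strongalpha2} (together with Lemma \ref{lem:qcircles}) is designed to justify.

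One small correction: $\ell_n$ is not $p^n\ell_0$. In a homogeneous $(N,p)$-snowflake the replacement at each step uses \emph{one} of the two generators in Figure \ref{fig:figure1} for all edges, so $\ell_{n+1}$ is either $p\,\ell_n$ or $\tfrac14\ell_n$ depending on which generator is chosen at that step (cf.\ the formula $\diam E = 4^{-n}(4p)^{\lfloor\sqrt n\rfloor}$ in Section \ref{sec:sqrtn}). Your argument, however, only uses the homogeneity itself, namely that all stage-$n$ edges share a common length $\ell_n$ that decreases geometrically, so this slip does not affect the validity of the proof.
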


\begin{proof}
The necessity is clear. For the sufficiency, fix a subarc $\G \subset \mathcal{S}$ and let $n_0$ be the greatest integer $n$ for which $\G$ is contained in $\mathcal{S}_E$ for some edge $E$ of $S_n$. 
Assume for now that $n_0>0$. Denote by $E_i$, $i=1,\dots,4$, the oriented four segments constructed after $E$ in the $n_0+1$ step, that is $\mathcal{S}_E = \bigcup_{i=1}^4\mathcal{S}_{E_i}$. 
Inductively, if $w=i_1\cdots i_k$ with $i_j \in \{1,\dots,4\}$, let $E_{wi}$, be the oriented four segments constructed after $E_w$ in the $n_0+k+1$ step.

Suppose that $\G$ contains a subarc $\mathcal{S}_{E_i}$, $i=1,\dots,4$. Then $\diam{\G} \geq \frac{1}{4}\diam{\mathcal{S}_E} = \frac{1}{4}\diam{E}$. Let $\P$ be a $(\frac{1}{4}\diam{E})$-partition of $\mathcal{S}_E$ and 
$\P'=\{\G'\cap\G \colon \G'\in\P\}$. The weak chord-arc property of $\mathcal{S}_E$ and Lemma \ref{lem:Mindex} imply that there exists $N_0>0$ such that $|\P'| \leq |\P| \leq N_0(\diam{\G})^{-1}$. Since 
$\diam{\G'} \leq (\diam{\G})^2$ for each $\G'\in\P'$ it follows that $\G$ has the weak chord-arc property for some $M^*$ depending on $N_0$.

Suppose now that $\G$ contains none of the $\mathcal{S}_{E_i}$, $i=1,\dots,4$. Then, there exist $i\in\{1,2,3\}$ and maximal integers $r,q \geq 0$ such that 
$\G \subset \mathcal{S}_{E_{i4^q}}\cup \mathcal{S}_{E_{(i+1)1^r}}$. In this case, apply the arguments above for $\G\cap \mathcal{S}_{E_{i4^q}}$ and $\G\cap \mathcal{S}_{E_{(i+1)1^r}}$.

If $n_0 = 0$ then apply the arguments above for $\G\cap\mathcal{S}_{E_1},\dots,\G\cap\mathcal{S}_{E_N}$ where $E_1,\dots,E_N$ are the edges of $S_0$.
\end{proof}

\subsection{A non-rectifiable Jordan curve that satisfies the weak chord-arc property}\label{sec:logn}

Let $\mathcal S$ be the homogeneous $(N,p)$-snowflake where the first polygonal arc in Figure \ref{fig:figure1} is used only at the $10^n$-th steps, $n\in\N$. We also require that $\diam{\mathcal{S}} < (4p)^{-1}$.
Note that at the $10^k$ step of construction, the length of the polygonal curve $S_{10^k}$ is equal to $(4p)^{k}$. Thus, $\mathcal{S}$ is not rectifiable. We claim that $\mathcal{S}$ has the weak chord-arc property.

By Lemma \ref{lem:edges}, it suffices to check that all subarcs $\mathcal{S}_E$ are weak chord-arc curves. Fix an edge $E$ built at step $n$. Then $\diam{E} \geq 4^{-n}$. Let $k_0$ be the smallest $k\in\N$ such that 
$\diam{E'} \leq (\diam{E})^2$ for all $E'\in S_{n+k}$. We claim that $k_0\leq 9n$. Indeed, the construction of $\mathcal{S}$ implies that at step $10n$, each edge has diameter equal to 
$(4p)4^{-9n}\diam{E} \leq (4p)(\diam{E})^{10}$ since the first polygonal arc in Figure \ref{fig:figure1} has been used only once. The claim follows from our assumption that $\diam{E} \leq \diam{\mathcal{S}} < (4p)^{-1}$.

Let $\P$ be the set of all subarcs $\mathcal{S}_{E'}$ where $E'$ are constructed at step $n+k_0$ and have $E$ as their common parent. Then, $\diam{E'} = 4^{-k_0}C\diam{E}$ with $C=4p$ if the first polygonal arc has been used 
in the $k_0$ steps or $C=1$ otherwise. Since $k_0$ is minimal, 
\[\frac{1}{4}(\diam{\mathcal{S}_E})^2 \leq \diam{\mathcal{S}_{E'}}  \leq (\diam{\mathcal{S}_E})^2.\] 
Therefore, $|\P| = 4^{k_0} \leq 4C(\diam{E})^{-1} = 4C(\diam{\mathcal{S}_E})^{-1}$. By Lemma \ref{lem:qcircles}, there exists a $\diam{\mathcal{S}_E}$-partition of $\mathcal{S}_E$ that has at most $C'(\diam{\mathcal{S}_E})^{-1}$ 
elements, for some $C'>1$ depending on $N,p$. Hence, $\mathcal{S}_E$ has the weak chord-arc property.

\begin{cor}\label{cor:logn}
There exists a Jordan domain $\Omega$ with nonrectifiable boundary such that $\S(\Omega,t^{\a})$ is a quasisymmetric sphere for all $\a\in(0,1]$.
\end{cor}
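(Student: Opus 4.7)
The plan is to take $\Omega$ to be the bounded Jordan domain whose boundary is the homogeneous $(N,p)$-snowflake $\mathcal{S}$ constructed in Section \ref{sec:logn}. We already know from that section that $\mathcal{S}$ is a quasicircle, is non-rectifiable (since $\ell(S_{10^k})=(4p)^k\to\infty$), and satisfies the weak chord-arc property. Thus $\Omega$ is a Jordan domain with nonrectifiable boundary, and the remaining task is to verify that $\S(\Omega,t^{\a})$ is quasisymmetric to $\mathbb{S}^2$ for every $\a\in(0,1]$.

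For $\a=1$ the claim is immediate: the paper recalls in the introduction that \cite[Theorem 1.1]{VW2} characterizes quasisymmetric sphericality of $\S(\Omega,t)$ by the quasicircle property of $\partial\Omega$, and $\mathcal{S}$ is a quasicircle. For $\a\in(0,1)$, by Theorem \ref{thm:main} it suffices to show (3), namely that $\Omega$ has the LQC property and that $\partial\Omega$ is a weak chord-arc curve. The second condition is exactly what was proved in Section \ref{sec:logn}, so the only thing left is the LQC property of $\Omega$.

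To establish LQC, I would invoke the sufficient conditions provided in \cite{VW}, which deduce the LQC property from a smallness assumption on the chordal flatness of $\partial\Omega$. For a homogeneous $(N,p)$-snowflake the chordal flatness is uniformly controlled on every scale by the self-similar construction: on any edge $E$ in $S_n$, the subarc $\mathcal{S}_E$ sits within a thin neighborhood of $E$ whose width is comparable to $p^{n+O(1)}\diam{E}$, uniformly across edges and across scales (the occasional use of the first polygonal arc in Figure \ref{fig:figure1} only affects constants). Choosing $N$ large (or equivalently $p$ close to $1/4$) makes this flatness as small as needed to meet the hypothesis of the LQC criterion in \cite{VW}; adjusting the construction of $\mathcal{S}$ accordingly, the LQC property of $\Omega$ follows.

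The main obstacle is precisely this last step: verifying the quantitative flatness hypothesis from \cite{VW} for our snowflake, and making sure that the occasional insertion of the first polygonal arc (used only at the $10^n$-th steps to force nonrectifiability) does not destroy the uniform flatness bound. Once this is done, Theorem \ref{thm:main} applied to each $\a\in(0,1)$, together with the $\a=1$ case already handled, completes the proof.
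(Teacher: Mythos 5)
Your proposal follows essentially the same route as the paper: reduce via Theorem \ref{thm:main} (plus the quasicircle criterion for $\a=1$) to checking the LQC property and the weak chord-arc property, with the latter already done in Section \ref{sec:logn} and the former obtained from the chordal-flatness sufficient condition in \cite{VW}. The paper handles the LQC step more tersely, simply citing the discussion in Section~7 of \cite{VW} to the effect that there are $p_0>1/4$ and $N_0\ge4$ so that every homogeneous $(N,p)$-snowflake with $p\le p_0$, $N\ge N_0$ bounds an LQC domain, rather than re-deriving a flatness bound; incidentally, the flatness of $\mathcal{S}_E$ over an edge $E$ of $S_n$ is controlled by a constant depending only on $p$ (tending to $0$ as $p\to1/4$) times $\diam E$, not by a factor $p^{n+O(1)}$ as you wrote, but this does not affect the conclusion.
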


\begin{proof}
It follows from the discussion in Section 7 of \cite{VW} that there exists $p_0\in(\frac{1}{4},\frac{1}{2})$ and an integer $N_0 \geq 4$ such that every homogeneous $(N,p)$-snowflake with $p\leq p_0$, $N\geq N_0$ 
bounds a domain that satisfies the LQC property. Let $\Omega$ be the domain bounded by the snowflake constructed above with $p\leq p_0$, $N\geq N_0$. Since $\mathcal{S}$ is a quasicircle, $\S(\Omega,t)$ is a
quasisymmetric sphere. Moreover, the weak chord-arc property of $\mathcal{S}$ and Theorem \ref{thm:main} imply that $\S(\Omega,t^{\a})$ is a quasisymmetric sphere for all $\a\in(0,1)$.
\end{proof}

\subsection{A quasicircle of Assouad dimension $1$ that does not satisfy the weak chord-arc property}\label{sec:sqrtn}

Let $\mathcal S$ be the homogeneous $(N,p)$-snowflake where the first polygonal arc in Figure \ref{fig:figure1} is used only at the $n^2$-th steps for $n\in\N$. For convenience we also assume that each edge of $S_0$ has 
length equal to $1$. Then, if $E$ is an edge of $S_n$, $\diam{E} = 4^{-n}(4p)^{\lfloor \sqrt{n}\rfloor}$ where, $\lfloor x\rfloor$ denotes the greatest integer which is smaller than $x$.

We show that $\mathcal{S}$ has Assouad dimension equal to $1$ but does not have the weak chord-arc property.

Fix $\epsilon >0$; we claim that $\mathcal{S}$ has Assouad dimension less than $1+\epsilon$. Similarly to Lemma \ref{lem:edges}, it is easy to show that it is enough to verify the Assouad condition 
only for the subarcs $\mathcal{S}_E$. Take $\delta\in(0,1)$ and an edge $E$ of the $n$-th step polygon $S_n$, for some $n \in \N$. Let $m$ be the largest integer such that $\diam{E'}\geq \d\diam{E}$ for all edges $E'$ of $S_m$
and $\P$ be the set of all subarcs $\mathcal{S}_{E'}\subset \mathcal{S}_{E}$ where $E'$ is an edge of $S_m$. Then, $4^{n-m}(4p)^{\lfloor \sqrt{m}\rfloor-\lfloor \sqrt{n}\rfloor} \geq \delta$ and
\begin{equation}\label{eq:homexample1}
(m-n)\log{4} - (\sqrt{m} - \sqrt{n})\log{4p} \leq -\log{\d} + \log{4p}
\end{equation}
By elementary calculus, there exists $M>0$ depending on $\e,p$ such that 
$\sqrt{x} - \sqrt{y} \leq \frac{\e}{1+\e}\frac{\log{4}}{\log{4p}}(x-y)$ for all $x > M$ and $0<y<x$. 
If $m<M$ then clearly $|\P| = 4^{m-n} \leq 4^M < 4^{M}\d^{-1-\epsilon}$. If $m \geq M$ then $(m-n)\log{4} - (\sqrt{m} - \sqrt{n})\log{4p} \geq \frac{\log{4}}{1+\e}(m-n)$ 
and by (\ref{eq:homexample1})
\[ (m-n)\log{4} \leq \log\d^{-1-\epsilon} + 2\log{4p}.\]
Therefore, $|\P| = 4^{m-n} \leq 4^{M}(4p)^2\d^{-1-\epsilon}$
and the claim follows. Since $\e$ was chosen arbitrarily, $\mathcal{S}$ has Assouad dimension equal to $1$.

We show now that $\mathcal{S}$ does not have the weak chord-arc property. Let $n\in\N$, $E$ be an edge of the $n$-th step polygon $S_n$ and $m\geq n$ be the greatest integer such that 
$\diam{E'}\geq (\diam{E})^2$ for each edge $E'$ of $S_m$. Let $\P$ be the set of all subarcs $\mathcal{S}_{E'}\subset\mathcal{S}_E$ where $E'$ are edges of $S_m$. Then,
\[\left(4^{-n}(4p)^{\sqrt{n}-1}\right)^2 \leq (\diam{E})^2 \leq \diam{E'} \leq 4^{-m}(4p)^{\sqrt{m}} \]
which yields
\[  \frac12 - \frac{n}{m} \leq \frac{\log(4p)}{\sqrt{m}\log{4}}\left( \frac12 - \sqrt{\frac{n}{m}} \right) + \frac{\log{4p}}{m\log{4}} .\]
Note that as $n$ goes to infinity, $m$ goes to infinity and $n/m$ goes arbitrarily close to $1/2$. Hence, $m > \frac{25}{16}n$ for all sufficiently large $n$. 
Therefore,
\[ M(\mathcal{S}_E,\P) \simeq |\P|\frac{\diam{\mathcal{S}_{E'}}}{\diam{\mathcal{S}_E}}\geq (4p)^{\sqrt{m}-\sqrt{n}-1} \gtrsim (4p)^{\sqrt{m}-\sqrt{n}} \geq (4p)^{\sqrt{n}/4} \]
which goes to infinity as $n$ goes to infinity. Thus $\mathcal{S}$ is not a weak chord-arc curve.

\bibliographystyle{abbrv}
\bibliography{thesisref}

\def\cprime{$'$}
\begin{thebibliography}{10}

\bibitem{Ah}
L.~V. Ahlfors.
\newblock Quasiconformal reflections.
\newblock {\em Acta Math.}, 109:291--301, 1963.

\bibitem{Assouad}
P.~Assouad.
\newblock \'{E}tude d'une dimension m\'etrique li\'ee \`a la possibilit\'e de
  plongements dans {${\bf R}^{n}$}.
\newblock {\em C. R. Acad. Sci. Paris S\'er. A-B}, 288(15):A731--A734, 1979.

\bibitem{BerAhl}
A.~Beurling and L.~Ahlfors.
\newblock The boundary correspondence under quasiconformal mappings.
\newblock {\em Acta Math.}, 96:125--142, 1956.

\bibitem{bishop}
C.~J. Bishop.
\newblock A quasisymmetric surface with no rectifiable curves.
\newblock {\em Proc. Amer. Math. Soc.}, 127(7):2035--2040, 1999.

\bibitem{BonK}
M.~Bonk and B.~Kleiner.
\newblock Quasisymmetric parametrizations of two-dimensional metric spheres.
\newblock {\em Invent. Math.}, 150(1):127--183, 2002.

\bibitem{DToro}
G.~David and T.~Toro.
\newblock Reifenberg flat metric spaces, snowballs, and embeddings.
\newblock {\em Math. Ann.}, 315(4):641--710, 1999.

\bibitem{Luukkainen}
J.~Luukkainen.
\newblock Assouad dimension: antifractal metrization, porous sets, and
  homogeneous measures.
\newblock {\em J. Korean Math. Soc.}, 35(1):23--76, 1998.

\bibitem{Meyer}
D.~Meyer.
\newblock Snowballs are quasiballs.
\newblock {\em Trans. Amer. Math. Soc.}, 362(3):1247--1300, 2010.

\bibitem{TuVa}
P.~Tukia and J.~V{\"a}is{\"a}l{\"a}.
\newblock Quasisymmetric embeddings of metric spaces.
\newblock {\em Ann. Acad. Sci. Fenn. Ser. A I Math.}, 5(1):97--114, 1980.

\bibitem{Vais3}
J.~V{\"a}is{\"a}l{\"a}.
\newblock Quasisymmetric maps of products of curves into the plane.
\newblock {\em Rev. Roumaine Math. Pures Appl.}, 33(1-2):147--156, 1988.

\bibitem{VW}
V.~Vellis and J.-M. Wu.
\newblock Sets of constant distance from a {J}ordan curve.
\newblock {\em Ann. Acad. Sci. Fenn. Math.}, 39(1):211--230, 2014.

\bibitem{VW2}
V.~Vellis and J.-M. Wu.
\newblock Quasisymmetric spheres over {J}ordan domains.
\newblock {\em submitted}, http://arxiv.org/abs/1407.5029.

\end{thebibliography}

\end{document}